\newcommand{\dsp}{\displaystyle}
\newcommand{\eps}{\varepsilon} 
\newcommand{\curl}{\mbox{curl}}
\DeclareMathOperator {\curlb}{\bf{curl }}
\newcommand{\bv}[1]{{\bf{#1}}}
\newcommand{\ds}{\displaystyle}
\newtheorem{theorem}{Theorem}
\newtheorem{remark}{Remark}
\newcolumntype{L}[1]{>{\raggedright\arraybackslash}p{#1}}
\newcolumntype{C}[1]{>{\centering\arraybackslash}p{#1}}
\newcolumntype{R}[1]{>{\raggedleft\arraybackslash}p{#1}}
\newcommand\T{\rule{0pt}{3ex}}       
\newcommand\B{\rule[-1.2ex]{0pt}{0pt}} 
\renewcommand{\epsilon}{\varepsilon}
\numberwithin{equation}{section}
\begin{document}


\title{A Fully Fourth Order Accurate Energy Stable {Finite Difference Method} for Maxwell's Equations in Metamaterials}
\author{Puttha Sakkaplangkul\footnote{Department of Mathematics, King Mongkut's Institute of Technology Ladkrabang, Ladkrabang, Bangkok 10520, Thailand}  \and  Vrushali Bokil\footnote{Department of Mathematics, Oregon State University,
  Corvallis, OR, 97331} \and  Camille Carvalho\footnote{Applied Mathematics Unit, School of Natural Sciences,
    University of California, Merced, 5200 North Lake Road, Merced, CA
    95343}}
   \maketitle

  \begin{abstract}
We present a novel fully fourth order in time and space {finite difference method for the time domain Maxwell's equations} in metamaterials. We consider a Drude metamaterial model for the material response to incident electromagnetic fields. We consider the second order formulation of the system of partial differential equations that govern the evolution in time of electric and magnetic fields along with the evolution of the polarization and magnetization current densities. Our discretization employs fourth order staggering in space of different field components and the modified equation approach to obtain fourth order accuracy in time. Using the energy method, we derive energy relations for the continuous models, and design numerical schemes that preserve a discrete analogue of the energy relation. Numerical simulations are provided in one and two dimensional settings to illustrate fourth order convergence as well as compare with second order schemes. 
  \end{abstract}

  \textbf{Keywords: }Maxwell's equations, Drude Metamaterial, FDTD, high order methods, modified equation approach.
\\

\section{Introduction}
In this paper, we present the construction and analysis of fully fourth order in space and time {finite difference methods (FDMs)} based on staggered finite differences in space and the modified equation approach in time \cite{JoCo} for electromagnetic metamaterial models. Metamaterials have revealed a great opportunity for controlling light in nanophotonic devices, in particular by controlling surface plasmons that appear at the interfaces between the materials, with applications for antennas and optical cloaking (e.g. \cite{BaDeEb03,ZaSM05,cai2007,Maier07, sannomiya2008situ,GrBo10,akselrod2014probing}). Surface plasmons are sub-wavelength and highly sensitive to the interface's geometry. Therefore there is a need for accurate evaluations of the electromagnetic near fields that capture all the multiple scales inherent to those problems.

{The Yee-finite difference time domain (FDTD) method \cite{yee1966numerical} is one of the most well known FDMs for the numerical simulation of Maxwell's equations. Various second order accurate FDMs for metamaterial models have now been constructed} (e.g. \cite{ziol2001,mittra2008,jichun,pekmezci2014}). High-order schemes based on the first-order formulation of Maxwell's equations can be derived, but no associated discrete energy estimates have been proved {beyond second order in time \cite{bokil2014}.}

In this paper, we consider models based on the second order formulation of Maxwell's equations for electromagnetic wave propagation in materials that are characterized as Drude metamaterials \cite{jichun}. By converting temporal derivatives to spatial derivatives via the modified equation approach, we construct full fourth-order FDM schemes that mimic, at the discrete level, the energy estimate of the continuous models. Our construction is motivated by high order finite difference methods for the second order wave equation in the time domain as constructed in \cite{young1996,JoCo,cohenbook}. High-order numerical methods using second-order formulations have been recently used for dispersive media \cite{angel2019}. However, to our knowledge, discrete energy estimates have not been proved except for the scalar wave equation \cite{JoCo}. 

In Section \ref{sec:setting} of this paper we introduce the Maxwell-Drude model and its properties, Section \ref{sec:disc} presents the construction of the fourth-order FDM schemes, Section \ref{sec:stability} establishes stability of the semi and fully-discrete schemes in one spatial dimension. In Section \ref{sec:num} we present numerical results in one and two spatial dimensions, and we conclude in Section \ref{sec:conclu}. 

\section{Setting}\label{sec:setting}
\subsection{Drude Metamaterial Model: First Order Formulation}
We consider Maxwell's equations in a Drude metamaterial \cite{jichun}, given in the form 
\begin{equation} \label{EM_drude}
\begin{aligned}
& \eps_0 \partial_t \bv{E} + \bv{J}= \curlb  \bv{H}, \\
& \mu_0 \partial_t \bv{H} + \bv{K} = -\curlb \bv{E}, \\
& \partial_t \bv{J} =\eps_0 \omega_{pe}^2 \bv{E},\\
& \partial_t \bv{K} = \mu_0 \omega_{pm}^2\bv{H},\\
\end{aligned}
\end{equation} 
where $\bv{E}$ and $\bv{H}$ are the electric and magnetic fields, respectively, $\bv{J}$ and $\bv{K}$ are the polarization and magnetization current densities, respectively. The parameters $\eps_0$ and $\mu_0$ are the vacuum electric permittivity and magnetic permeability, respectively. Finally, the parameters $\omega_{pe}$ and $\omega_{pm}$ are the electric and magnetic plasma frequencies, respectively. System \eqref{EM_drude} has to be closed by adding initial and boundary conditions. 

System \eqref{EM_drude} is linear and thus admits harmonic plane wave solutions. Taking Fourier transforms in time, i.e. assuming dependence of solutions on $e^{-i \omega t}$, one finds that the Drude metamaterial model is characterized by a dispersive permittitivity and a permeability given as
\begin{equation}\label{eps_mu}
\eps(\omega) = \eps_0 \left( 1 - \frac{\omega_{pe}^2 }{\omega^2  }\right) , \quad \mu (\omega) = \mu_0 \left( 1 - \frac{\omega_{pm}^2 }{\omega^2 }\right),
\end{equation}
following Drude's law \cite{jichun}. We call system \eqref{EM_drude} the Maxwell-Drude model.

\subsection{Drude Metamaterial Model: Second Order Formulation}
System \eqref{EM_drude} can be rewritten in second order form in which we get two sets of decoupled equations, with one pair of equations involving only the field variables $(\bv{E}, \bv{K})$ modeled by the system of equations
\begin{subequations}
  \label{Drude_em1}
\begin{align}
  \label{Drude_em1.1}
& \partial_{tt} \bv{E}  + c^2 \curlb \curlb \bv{E} + {\omega_{pe}^2}  \bv{E} = -c^2 \curlb \bv{K},\\
  \label{Drude_em1.2}
& \partial_{tt} \bv{K}  + {\omega_{pm}^2}  \bv{K} = -\omega_{pm}^2 \curlb \bv{E},
\end{align}
\end{subequations}
where $c = 1/\sqrt{\varepsilon_0\mu_0}$, and the second pair involving the field variables $(\bv{H}, \bv{J})$ modeled by the system of equations
\begin{subequations}
  \label{Drude_em2}
\begin{align}
& \partial_{tt} \bv{H}  + c^2 \curlb \curlb \bv{H} + {\omega_{pm}^2}  \bv{H}  =c^2 \curlb \bv{J},\\
& \partial_{tt} \bv{J}  +\omega_{pe}^2 \bv{J} =  \omega_{pe}^2 \curlb \bv{H}.
\end{align}
\end{subequations} 
Since the two sub-systems in \eqref{Drude_em1} and \eqref{Drude_em2} are decoupled they can be solved separately. We note that in 2D and 3D, these systems will be coupled through the divergence conditions. In the 1D case, when the divergence conditions are decoupled from the curl equations, these systems are truly decoupled. In this paper, we do not consider the divergence conditions.
\subsection{Energy Estimates}
To derive energy estimates we need to define the following functional space:
\begin{equation}
  \begin{split}
    \bv{H}_{\#}(\curlb; \Omega) & : = \{ \mathbf{v} \in (L^2(\Omega))^3, \curlb \mathbf{v} \in (L^2(\Omega))^3, \\
    & \mathbf{v} \ \text{periodic on} \ \partial \Omega\}.
\end{split}
  \end{equation}  
From now on $\Vert \cdot \Vert$ denotes the $L^2$ norm in $\Omega$. From the second order formulation one can check that we have the following energy estimates.
\begin{theorem}\label{th:MD1}
  Assume $\bv{E} \in C^1((0,T); \bv{H}_{\#}(\curlb; \Omega))$, $\bv{K} \in C^1((0,T); \bv{H}_{\#}(\curlb;\Omega))$. System \eqref{Drude_em1} satisfies the following energy estimate
  \begin{equation}
    \frac{d\mathcal{E}_{EK}}{dt} = 0,
  \end{equation}
  where the energy $\mathcal{E}_{EK}$ is defined as 
\begin{equation}\label{eq:E_EK}
\begin{aligned}
\mathcal{E}_{EK} = \frac{1}{2}\Bigl( \frac{1}{c^2}||\partial_t \bv{E}||^2+  \frac{1}{\omega_{pm}^2} & ||\partial_t \bv{K}||^2 + \frac{\omega_{pe}^2}{c^2}||\bv{E}||^2  +||\curlb \bv{E} +\bv{K}||^2 \Bigr).
  \end{aligned}
\end{equation}
\end{theorem}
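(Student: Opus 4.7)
The plan is to apply the standard energy method: test each equation of \eqref{Drude_em1} in $L^2(\Omega)$ against a suitably weighted time derivative, add the results, and use integration by parts on the periodic domain to show that all cross terms combine into a total time derivative.

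First, I would take the $L^2$ inner product of \eqref{Drude_em1.1} with $\frac{1}{c^2}\partial_t \bv{E}$ and of \eqref{Drude_em1.2} with $\frac{1}{\omega_{pm}^2}\partial_t \bv{K}$. The weights $1/c^2$ and $1/\omega_{pm}^2$ are chosen precisely so that the $\partial_{tt}$ terms produce $\tfrac{1}{2}\tfrac{d}{dt}\|\partial_t \bv{E}/c\|^2$ and $\tfrac{1}{2}\tfrac{d}{dt}\|\partial_t \bv{K}/\omega_{pm}\|^2$ respectively. On the first equation, the $\omega_{pe}^2 \bv{E}$ term gives $\tfrac{1}{2}\tfrac{d}{dt}(\tfrac{\omega_{pe}^2}{c^2}\|\bv{E}\|^2)$, and using the self-adjointness of $\curlb$ on $\bv{H}_{\#}(\curlb;\Omega)$ (a direct consequence of the periodic boundary conditions in the definition of the space),
\begin{equation*}
\langle \curlb\curlb \bv{E},\,\partial_t \bv{E}\rangle = \langle \curlb \bv{E},\,\partial_t \curlb \bv{E}\rangle = \tfrac{1}{2}\tfrac{d}{dt}\|\curlb \bv{E}\|^2.
\end{equation*}
Similarly, the $\omega_{pm}^2 \bv{K}$ term in the second equation gives $\tfrac{1}{2}\tfrac{d}{dt}\|\bv{K}\|^2$.

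Adding the two identities, the bulk of the terms already match the quadratic quantities in $\mathcal{E}_{EK}$, and the right-hand side contributes the cross terms
\begin{equation*}
-\langle \curlb \bv{K},\,\partial_t \bv{E}\rangle - \langle \curlb \bv{E},\,\partial_t \bv{K}\rangle.
\end{equation*}
The key observation is that, again by self-adjointness of $\curlb$ under periodic boundary conditions, $\langle \curlb \bv{K},\,\partial_t \bv{E}\rangle = \langle \bv{K},\,\partial_t \curlb \bv{E}\rangle$, so the cross terms assemble into $-\tfrac{d}{dt}\langle \curlb \bv{E},\,\bv{K}\rangle$. Moving this to the left and combining with the already-present $\tfrac{1}{2}\tfrac{d}{dt}(\|\curlb \bv{E}\|^2 + \|\bv{K}\|^2)$ yields
\begin{equation*}
\tfrac{1}{2}\tfrac{d}{dt}\bigl(\|\curlb \bv{E}\|^2 + 2\langle \curlb \bv{E},\bv{K}\rangle + \|\bv{K}\|^2\bigr) = \tfrac{1}{2}\tfrac{d}{dt}\|\curlb \bv{E}+\bv{K}\|^2,
\end{equation*}
which is exactly the remaining term in \eqref{eq:E_EK}, so $\tfrac{d\mathcal{E}_{EK}}{dt}=0$.

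The main (mild) obstacle is the bookkeeping of the cross terms: one has to realize that the two right-hand-side pairings, which at first glance look like dissipation, are actually the time derivative of $\langle \curlb \bv{E},\bv{K}\rangle$, and that combining this with the bare $\|\curlb \bv{E}\|^2$ and $\|\bv{K}\|^2$ contributions completes the square $\|\curlb \bv{E}+\bv{K}\|^2$. The regularity hypothesis $\bv{E},\bv{K}\in C^1((0,T);\bv{H}_{\#}(\curlb;\Omega))$ is exactly what is needed to justify the integrations by parts and the differentiation under the integral sign.
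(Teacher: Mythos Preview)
Your proposal is correct and follows exactly the approach the paper indicates: the energy method with integration by parts under periodic boundary conditions. The paper itself does not spell out the computation beyond that one-line description, so your detailed bookkeeping of the cross terms and the completion of the square $\|\curlb\bv{E}+\bv{K}\|^2$ is precisely the argument intended.
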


\begin{theorem}\label{th:MD2}
  Assume $\bv{H} \in C^1((0,T); \bv{H}_{\#}(\curlb; \Omega))$, $\bv{J} \in C^1((0,T); \bv{H}_{\#}(\curlb;\Omega))$. System \eqref{Drude_em2} satisfies the energy estimate
  \begin{equation}
    \frac{d\mathcal{E}_{HJ}}{dt} = 0,
  \end{equation}
  where the energy $\mathcal{E}_{HJ}$ is defined as 
\begin{equation}\label{eq:E_HJ}
\begin{aligned}
  \mathcal{E}_{HJ} = \frac{1}{2}\Bigl( \frac{1}{c^2}||\partial_t \bv{H}||^2+   \frac{1}{\omega_{pe}^2} & ||\partial_t \bv{J}||^2 +  \frac{\omega_{pm}^2}{c^2}||\bv{H}||^2  +||\curlb \bv{H} -\bv{J}||^2 \Bigr).
  \end{aligned}
\end{equation}
\end{theorem}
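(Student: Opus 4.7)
The plan is to mirror the standard energy method for second-order wave systems: take the $L^2$ inner product of each evolution equation with a suitably scaled time derivative of its primary unknown, add the resulting identities, and use the periodic boundary conditions on $\partial \Omega$ to integrate by parts and cancel the cross terms. The weights in the inner products should be chosen so as to reproduce the prefactors appearing in \eqref{eq:E_HJ}: thus I would pair the first equation of \eqref{Drude_em2} with $c^{-2}\partial_t\bv{H}$ and the second with $\omega_{pe}^{-2}\partial_t\bv{J}$.

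After this pairing, the dissipative-looking terms assemble themselves into exact time derivatives. Specifically, $c^{-2}\langle \partial_{tt}\bv{H},\partial_t\bv{H}\rangle$ gives $\tfrac{1}{2}\tfrac{d}{dt}\|c^{-1}\partial_t\bv{H}\|^2$, and the analogous identity holds for $\bv{J}$. The term $\omega_{pm}^2\langle \bv{H},\partial_t\bv{H}\rangle/c^2$ produces $\tfrac{1}{2}\tfrac{d}{dt}(\omega_{pm}^2 c^{-2}\|\bv{H}\|^2)$. The double-curl contribution $\langle \curlb\curlb\bv{H},\partial_t\bv{H}\rangle$ is rewritten, using the self-adjointness of $\curlb$ under periodic boundary conditions (i.e. $\langle \curlb\bv{u},\bv{v}\rangle=\langle\bv{u},\curlb\bv{v}\rangle$ for $\bv{u},\bv{v}\in \bv{H}_{\#}(\curlb;\Omega)$), as $\langle \curlb\bv{H},\curlb\partial_t\bv{H}\rangle=\tfrac{1}{2}\tfrac{d}{dt}\|\curlb\bv{H}\|^2$.

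The step I view as the crux is accounting for the coupling terms on the right-hand sides of \eqref{Drude_em2}. The cross contributions $c^2\langle \curlb\bv{J},\partial_t\bv{H}\rangle/c^2=\langle \curlb\bv{J},\partial_t\bv{H}\rangle$ and $\langle \curlb\bv{H},\partial_t\bv{J}\rangle$ do not individually form time derivatives, but by integration by parts the first equals $\langle \bv{J},\curlb\partial_t\bv{H}\rangle$, and then the combination
\[
\langle \bv{J},\curlb\partial_t\bv{H}\rangle+\langle \curlb\bv{H},\partial_t\bv{J}\rangle \;=\; \tfrac{d}{dt}\langle \curlb\bv{H},\bv{J}\rangle
\]
is an exact time derivative. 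Once this is inserted with the correct sign (note the sign flip between the two equations of \eqref{Drude_em2}), it combines with $\tfrac{1}{2}\tfrac{d}{dt}\|\curlb\bv{H}\|^2$ and $\tfrac{1}{2}\tfrac{d}{dt}\|\bv{J}\|^2$ (the latter coming from $\omega_{pe}^2\langle \bv{J},\partial_t\bv{J}\rangle/\omega_{pe}^2$) to reconstruct precisely $\tfrac{1}{2}\tfrac{d}{dt}\|\curlb\bv{H}-\bv{J}\|^2$ via the polarization identity. Collecting all contributions yields $d\mathcal{E}_{HJ}/dt=0$.

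Given the regularity hypotheses $\bv{H},\bv{J}\in C^1((0,T);\bv{H}_{\#}(\curlb;\Omega))$, all inner products above are well defined, and the integration by parts step produces no boundary contribution thanks to periodicity built into $\bv{H}_{\#}(\curlb;\Omega)$. The argument is essentially the one used for Theorem~\ref{th:MD1}, applied with the roles of $(\bv{E},\bv{K})$ and $(\bv{H},\bv{J})$ swapped and the sign of the coupling flipped, which is why the energy \eqref{eq:E_HJ} features $\|\curlb\bv{H}-\bv{J}\|^2$ rather than $\|\curlb\bv{E}+\bv{K}\|^2$.
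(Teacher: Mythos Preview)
Your proposal is correct and follows exactly the approach the paper indicates: the paper states only that ``the proofs are based on the energy method which employs integration by parts,'' and your argument---pairing each equation with the appropriately weighted time derivative, using the self-adjointness of $\curlb$ under periodic boundary conditions, and recombining the cross terms into $\tfrac{d}{dt}\langle\curlb\bv{H},\bv{J}\rangle$ so as to reconstruct $\|\curlb\bv{H}-\bv{J}\|^2$---is precisely that method carried out in detail.
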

The proofs are based on the energy method which employs integration by parts. 

\section{Fourth order FDTD schemes} \label{sec:disc}
\subsection{Staggered High Order Spatial Finite Difference Methods} \label{sec:grid}

In this section, we describe the construction of second and fourth order approximations to the first and second order derivative
operators to get approximations to the vector $\curlb$ operator in 2D and 3D and the scalar $\curl$ operator in 1D and 2D. To that aim we need to define approximations of the derivative operators $\partial_x$, $\partial_y$, $\partial_z$. The construction presented in this section follows the exposition in \cite{cohenbook,bokil2011}. \\
{Consider $\Omega = [0,L]^d \subset  \mathbb{R}^d, \, d = 1,2,3,$ with $L\geq 0$. From now on we consider $d=3$.}
Given $M \in \mathbb{N}^\star := \mathbb{N} \setminus \lbrace 0 \rbrace$, let $h  = L/ M> 0$ be a uniform mesh step size, and define for $\ell = (i,j,k) \in \llbracket 0, M\rrbracket^3$, ${\bf{x}}_{\ell} =(x_i, y_j,z_k) =  (ih, jh,kh)$ \footnote{$\llbracket 0, M\rrbracket$ denotes the set of integers $0, \dots, M$.}. 
For $\ell = (i,j,k) \in \llbracket 0, M-1\rrbracket^3$, define $ {\bf{x}}^x_{\ell + \frac{1}{2}}=(x_{i+\frac{1}{2}}, y_j,z_k) $, $ {\bf{x}}^y_{\ell + \frac{1}{2}}=(x_i, y_{j+\frac{1}{2}},z_k) $, $ {\bf{x}}^z_{\ell + \frac{1}{2}}=(x_i, y_j,z_{k +\frac{1}{2}}) $. We define several staggered grids on $\Omega$. The {\it primal grid} in a direction along one of the axes, $G_p$, is defined as 
\begin{equation}
\label{eq:primalgrid}
G_p:  (X_\ell)_\ell, \quad  X \in \{x,y,z\}, \quad  l \in \llbracket 0, M\rrbracket .
\end{equation}
The {\it dual grid} in one direction, $G_d$, on $[0,L]$ is defined as  
\begin{equation}
\label{eq:dualgrid}
G_d: (X_{\ell+1/2})_\ell, \quad  X \in \{x,y,z\}, \quad  \ell \in \llbracket 0, M-1 \rrbracket .
\end{equation}
We define staggered grids on $[0,L]^3$ as ${\bf{G_p}} =G_p \times G_p \times G_p $, and ${\bf{G_d}}^x = G_d \times G_p \times G_p $,  ${\bf{G_d}}^y = G_p \times G_d \times G_p $, ${\bf{G_d}}^z = G_p \times G_p \times G_d $.
For smooth functions ${\bf{v}}$ and $\bv{u}$, define ${\bf{v}}_\ell \approx {\bf{v}}({\bf{x}}_{\ell})$, with ${\bf{x}_\ell \in {\bf{G_p}}}$, $\ell \in \llbracket 0, M\rrbracket^d$, and ${\bf{u}}^m_{\ell+\frac{1}{2}} \approx {\bf{u}}({\bf{x}}^m_{\ell+\frac{1}{2}})$, with ${\bf{x}}^m_{\ell + \frac{1}{2}} \in {\bf{G_d}}^m$, $m \in \{x,y,z\}$. On the primal grid $\bv{G_p}$ we define the discrete space  
\begin{equation}
  \begin{split}
{\bv{V}}_{0,h} & := \{\bv{v}_h:=(\bv{v}_\ell),\, \ell \in \llbracket 0, M\rrbracket^d,  \,  {||\bv{v}_h||_{0,h}:=} h\sum_{\ell = 0}^{M}|\bv{v}_\ell|^2 < \infty\}  + \ \text{periodic B.C.}
\end{split}
  \end{equation}
and on the dual grids $\bv{G_d}^m$, $m = x,y,z$, we define the discrete spaces
\begin{equation}
\begin{aligned}
& \bv{V}^m_{\frac{1}{2},h}  := \{ \bv{u}^m_h :=\left( \bv{u}^m_{\ell+\frac{1}{2}} \right),  \, \ell \in \llbracket 0, M-1 \rrbracket^d, {||\bv{u}^m_h||_{\frac{1}{2},h,m} : =}  h\sum_{\ell =0}^{M-1}|\bv{u}^m_{\ell+\frac{1}{2}} |^2 < \infty \}  +  \ \text{periodic B.C}.
  \end{aligned}
\end{equation}
The $\ell^2$ norms on ${\bv{V}}_{0,h}$, and $\bv{V}^m_{\frac{1}{2},h} $, denoted by $||\cdot||_{0,h}$, and $||\cdot||_{\frac{1}{2},h,m}$, respectively,  are 
derived from corresponding $\ell^2$ scalar products $\langle \cdot,\cdot \rangle_{0,h}$, and $\langle \cdot,\cdot\rangle_{\frac{1}{2},h,m}$.
We now define the discrete second order finite difference operators 
\begin{subequations}
\label{op2s}
\begin{align}
\begin{split}
& \mathcal{F}^{(2)}_{h,m}:{\bv{V}}_{0,h} \rightarrow \bv{V}^m_{\frac{1}{2},h} , \ \forall \bv{v}_h \in {\bv{V}}_{0,h} ,\,  \left(\mathcal{F}^{(2)}_{h,m} \bv{v}_h\right)_{\ell+\frac{1}{2}} 
  :=
  \ds\frac{\bv{v}^m_{\ell+1}-\bv{v}_{\ell}}{h},\label{ops1}
  \end{split}\\
   \begin{split}
&
\mathcal{F}^{(2)}_{h,m \ast}:\bv{V}^m_{\frac{1}{2},h}  \rightarrow \bv{V}_{0,h}, \ \forall \bv{u}^m_h \in \bv{V}^m_{\frac{1}{2},h} , \, \left(\mathcal{F}^{(2)}_{h,m \ast} \bv{u}^m_h\right)_{\ell} 
	:=
  \ds\frac{\bv{u}^m_{\ell+\frac{1}{2}}-\bv{u}^m_{\ell-\frac{1}{2}} }{h},
   \label{ops2}
  \end{split}
\end{align}
\end{subequations}
and the discrete fourth order finite difference operators,
\begin{subequations}
\label{op4s}
\begin{align}
\begin{split}
& \mathcal{F}^{(4)}_{h,m}:{\bv{V}}_{0,h} \rightarrow \bv{V}^m_{\frac{1}{2},h} , \  \forall \bv{v}_h \in {\bv{V}}_{0,h} ,\, \left(\mathcal{F}^{(4)}_{h,m} \bv{v}_h\right)_{\ell+\frac{1}{2}} :=
\ds \frac{9}{8} \frac{\bv{v}^m_{\ell+1} -\bv{v}_{\ell} }{h} - \frac{1}{24} \frac{\bv{v}^m_{\ell+2}- \bv{v}^m_{\ell-1} }{h}, \\
  \end{split} \\
   \begin{split}
&\mathcal{F}^{(4)}_{h,m \ast}:\bv{V}^m_{\frac{1}{2},h}  \rightarrow \bv{V}_{0,h}, \   \forall \bv{u}^m_h \in \bv{V}^m_{\frac{1}{2},h} ,\, \left(\mathcal{F}^{(4)}_{h,m \ast} \bv{u}^m_h\right)_{\ell} :=
\ds\frac{9}{8} \frac{\bv{u}^m_{\ell+\frac{1}{2}}-\bv{u}^m_{\ell-\frac{1}{2}} }{h} - \frac{1}{24} \frac{\bv{u}^m_{\ell+\frac{3}{2}} - \bv{u}^m_{\ell-\frac{3}{2}} }{h}.
  \end{split}
\end{align}
\end{subequations}
Operators \eqref{op2s}-\eqref{op4s} are the second-order and fourth-order discrete approximations of the operator $\partial_m$, $m \in \{x,y,z\}$ with step size $h$, respectively.
Finally we can define the second and fourth order discrete approximations of the 3D $\curlb$ operator {and its dual} in terms of $ \mathcal{F}^{(2)}_{h,m}$, and $\mathcal{F}^{(4)}_{h,m}$, $m \in \{x,y,z\}$, {and $ \mathcal{F}^{(2)}_{h,m\ast}$, and $\mathcal{F}^{(4)}_{h,m\ast}$, respectively}. For all $\bv{v}_h = (v_h^x, v_h^y,v_h^z)$, $\bv{u}_h = (u_h^x, u_h^y,u_h^z)$, and for $p = 2,4$, we have  
\hspace*{-15pt}\begin{subequations}
\label{ops}
\begin{align}
& \begin{split}
\hspace*{-25pt}\left(\curlb^{(p)}_h\bv{v}_h\right)_{\ell+\frac{1}{2}} := & \left(  \left(\mathcal{F}^{(p)}_{h,y} {v}^z_h\right)_{\ell+\frac{1}{2}}- \left(\mathcal{F}^{(p)}_{h,z} {v}^y_h\right)_{\ell+\frac{1}{2}},\right.  \left(\mathcal{F}^{(p)}_{h,z} {v}^x_h\right)_{\ell+\frac{1}{2}}- \left(\mathcal{F}^{(p)}_{h,x} {v}^z_h\right)_{\ell+\frac{1}{2}}, \left. \left(\mathcal{F}^{(p)}_{h,x} {v}^y_h\right)_{\ell+\frac{1}{2}}- \left(\mathcal{F}^{(p)}_{h,y} {v}^x_h\right)_{\ell+\frac{1}{2}} \right), 
 \end{split} \\ 
&  \begin{split}
\left(\curlb^{(p)}_{h,\ast} \bv{u}_h\right)_{\ell} :=& \left( \left(\mathcal{F}^{(p)}_{h,y\ast} {u}^z_h\right)_{\ell}- \left(\mathcal{F}^{(p)}_{h,z\ast} {u}^y_h\right)_{\ell}, \right.      \left(\mathcal{F}^{(p)}_{h,z\ast} {u}^x_h\right)_{\ell}- \left(\mathcal{F}^{(p)}_{h,x\ast} {u}^z_h\right)_{\ell},   \left. \left(\mathcal{F}^{(p)}_{h,x\ast} {u}^y_h\right)_{\ell}- \left(\mathcal{F}^{(p)}_{h,y\ast} {u}^x_h\right)_{\ell} \right).
 \end{split}
\end{align}
\end{subequations}
Note that for $d = 2$ one needs vector and scalar curl operators. For $p = 2,4$, their discrete version, respectively,  is defined as
\begin{subequations}
	\label{discurl}
	\begin{align}
&	\left(\curlb_h^{(p)} {v}_h \right)_{\ell+\frac{1}{2}} := \left( \left( \mathcal{F}_{h,y}^{(p)}v_h \right)_{\ell+\frac{1}{2}} , - \left( \mathcal{F}_{h,x}^{(p)}v_h\right)_{\ell+\frac{1}{2}}   \right), \\
	& \left(\curl_h^{(p)} \bv{v}_h \right)_{\ell+\frac{1}{2}} := \left(\mathcal{F}_{h,x}^{(p)}v_h^y \right)_{\ell+\frac{1}{2}} -  \left( \mathcal{F}_{h,y}^{(p)}v_h^x\right)_{\ell+\frac{1}{2}}  ,
	\end{align}
\end{subequations}
and similarly for $\curlb_{h,\ast}^{(p)} $,  $\curl_{h,\ast}^{(p)} $. Finally for $d=1$, the curl operator and its dual, up to a sign, reduce to $ \mathcal{F}^{(p)}_{h,m}$ and $ \mathcal{F}^{(p)}_{h,m \ast}$, $p=2,4$, $m \in \{x,y,z\}$, respectively.
\subsection{Semi-discretization in Space}
We start with the spatial discretization of the system for the pair $(\bv{E},\bv{K})$ and then follow that with the discretization for the second sub-system corresponding to the pair $(\bv{H},\bv{J})$. The discretizations are based on the staggered finite difference methods presented above on dual and primal grids. 

Using the discretization introduced in section \ref{sec:grid}, we obtain the following semi-discretizations: for $\bv{E}_h$ defined on a primal grid, 
and $\bv{K}_h$ defined on a dual grid,
(similarly for $\bv{H}_h$ defined on a primal grid,
and $\bv{J}_h$ defined on a dual grid),
\begin{subequations}
  \label{MD_semi}
\begin{align}
 \begin{split}
    & \partial_{tt} \bv{E}_h  + c^2 (\curlb^{(4)}_{h,\ast} \curlb^{(4)}_{h})\bv{E}_h + {\omega_{pe}^2}  \bv{E}_h  = -c^2 \curlb^{(4)}_{h,\ast} \bv{K}_h,\\
    \end{split}\\
& \partial_{tt} \bv{K}_h  + {\omega_{pm}^2}  \bv{K}_h = -\omega_{pm}^2 \curlb^{(4)}_{h}  \bv{E}_h,\\
\begin{split}
  & \partial_{tt} \bv{H}_h  + c^2 (\curlb^{(4)}_{h,\ast}  \curlb^{(4)}_{h}) \bv{H}_h + {\omega_{pm}^2}  \bv{H}_h  =c^2 \curlb^{(4)}_{h,\ast}  \bv{J}_h,\\
  \end{split}\\
& \partial_{tt} \bv{J}_h  +\omega_{pe}^2 \bv{J}_h =  \omega_{pe}^2 \curlb^{(4)}_{h}\bv{H}_h .
\end{align}
\end{subequations} 
\subsection{Modified Equation Approach for Time Discretization}
Given $N \in \mathbb{N}^\star$, we define $\Delta t = T/N$, and $t^n = n \Delta t$, $n \in \llbracket 0, N\rrbracket$. For all $\bv{X}_\ell^n = \bv{X}(\bv{x}_\ell, t^n)$, we have 
\begin{equation}\label{eq:time_disc}
\begin{aligned}
\partial_{tt} \bv{X}_\ell^n  &= \dsp \frac{\bv{X}_\ell^{n+1} - 2 \bv{X}_\ell^n + \bv{X}_\ell^{n-1}}{\Delta t^2}  - \frac{\Delta t^2}{12} \partial^4_{t} \bv{X}_\ell^n + O(\Delta t^4) :=\delta_{tt} \bv{X}_\ell^n - \frac{\Delta t^2}{12} \partial^4_{t} \bv{X}_\ell^n + O(\Delta t^4).\\
\end{aligned}
\end{equation}

Based on \cite{JoCo}, we use the modified equation approach to convert fourth order time derivatives into derivatives in space. If the spatial derivatives are then discretized by second order finite differences, we will be able to achieve overall fourth order accuracy in space and time. Starting with \eqref{Drude_em1.1}, and finding an expression of $\partial^4_t \bv{E}$ in terms of spatial derivatives, assuming we can interchange time and space differentiation and using \eqref{Drude_em1} we have  
\[\begin{aligned}
 \partial^4_{t} \bv{E}  & = -  c^2 \curlb \curlb \partial_{tt}\bv{E}- {\omega_{pe}^2}  \partial_{tt}\bv{E} -c^2 \curlb\partial_{tt}\bv{K} ,\\
  = & -  c^2 \curlb  \curlb  (-  c^2 \curlb  \curlb\bv{E} - {\omega_{pe}^2}  \bv{E} -c^2 \curlb \bv{K}) \\
   - & {\omega_{pe}^2}  (-  c^2 \curlb \curlb\bv{E}- {\omega_{pe}^2}  \bv{E} -c^2 \curlb \bv{K} ) \\
 - &c^2 \curlb (- {\omega_{pm}^2}  \bv{K} -\omega_{pm}^2 \curlb \bv{E}) ,\\
\end{aligned}
\]
leading to 
\begin{equation} \label{eq:conversion_MD_E}
\begin{aligned}
 &\partial^4_{t} \bv{E}   = c^4 (\curlb  \curlb)^2  \bv{E} +  c^2 (2 {\omega_{pe}^2}+{\omega_{pm}^2})  \curlb  \curlb\bv{E} \\
& +{\omega_{pe}^4}  \bv{E}  
  + c^4  \curlb \curlb \curlb\bv{K} +c^2 ( {\omega_{pe}^2}+{\omega_{pm}^2})  \curlb \bv{K} .
 \end{aligned}
\end{equation}
Finally the discrete version of \eqref{eq:conversion_MD_E} becomes:
\begin{subequations} \label{Disc_conver_MD}
\begin{align}\label{disc_MD_E}
&  \partial^4_{t} \bv{E}_h   =  c^4  (\curlb^{(2)}_{h,\ast}  \curlb^{(2)}_{h})^2   \bv{E}_h  +  c^2 (2 {\omega_{pe}^2}+{\omega_{pm}^2}) (\curlb^{(2)}_{h,\ast}  \curlb^{(2)}_{h})\bv{E}_h +{\omega_{pe}^4}  \bv{E}_h \\
 &  + c^4  (\curlb^{(2)}_{h} \curlb^{(2)}_{h,\ast}  \curlb^{(2)}_{h})\bv{K}_h +c^2 ( {\omega_{pe}^2}+{\omega_{pm}^2})\curlb^{(2)}_{h,\ast} \bv{K}_h.  \notag
\end{align}
Similarly, we obtain the following equations for the other variables
\begin{align}\label{disc_MD_K}
 &\partial^4_{t} \bv{K}_h   = c^2 \omega_{pm}^2  (\curlb^{(2)}_{h}  \curlb^{(2)}_{h,\ast} \curlb^{(2)}_{h} )  \bv{E}_h + \omega_{pm}^2 ( {\omega_{pe}^2} +{\omega_{pm}^2}) \curlb^{(2)}_{h}\bv{E}_h \\
  &+ c^2 {\omega_{pm}^2}   (\curlb^{(2)}_{h}  \curlb^{(2)}_{h,\ast})  \bv{K}_h   + \omega_{pm}^4 \bv{K}_h,  \notag
\end{align}
\begin{align}\label{disc_MD_H}
& \partial^4_{t} \bv{H}_h  =  c^4  (\curlb^{(2)}_{h,\ast}  \curlb^{(2)}_{h})^2   \bv{H}_h + c^2 ( 2{\omega_{pm}^2}+{\omega_{pe}^2})  (\curlb^{(2)}_{h,\ast}  \curlb^{(2)}_{h})\bv{H}_h +{\omega_{pm}^4}  \bv{H}_h  \\
 &-c^4  (\curlb^{(2)}_{h} \curlb^{(2)}_{h,\ast}  \curlb^{(2)}_{h})\bv{J}_h -c^2 ( {\omega_{pm}^2}+{\omega_{pe}^2})\curlb^{(2)}_{h,\ast}\bv{J}_h, \notag
 \end{align}
\begin{align}\label{disc_MD_J} 
&  \partial^4_{t} \bv{J}_h   =  - c^2 \omega_{pe}^2  (\curlb^{(2)}_{h}  \curlb^{(2)}_{h,\ast} \curlb^{(2)}_{h} )  \bv{H}_h  -\omega_{pe}^2 ( {\omega_{pm}^2}+{\omega_{pe}^2}) \curlb^{(2)}_{h}\bv{H}_h \\
  &+ c^2 {\omega_{pe}^2} (\curlb^{(2)}_{h}  \curlb^{(2)}_{h,\ast}) \bv{J}_h   + \omega_{pe}^4 \bv{J}_h .\notag
\end{align}
\end{subequations}

\begin{remark}
For this model, the schemes follow exactly the idea in \cite{JoCo}. {The difference from \cite{JoCo} is that we have systems of PDEs composed of wave-like equations, along with a dissipative term (not present in \cite{JoCo})}. 
\end{remark}
\subsection{Full discretization}
Substituting the modified equations \eqref{Disc_conver_MD} into \eqref{eq:time_disc} for each field, then into the semi-discrete equations \eqref{MD_semi} we obtain the fully fourth order discrete scheme
\begin{subequations}
  \label{MD_full}
\begin{align}
\begin{split} 
&  \dsp  \delta_{tt} \bv{E}_\ell^n+ c^2  (\curlb^{(4)}_{h,\ast} \curlb^{(4)}_{h})\bv{E}_\ell^n + {\omega_{pe}^2}  \bv{E}_\ell^n + c^2 \curlb^{(4)}_{h,\ast}  \bv{K}_{\ell+1/2}^n  - \frac{\Delta t^2}{12}  \left[ c^4  (\curlb^{(2)}_{h,\ast}  \curlb^{(2)}_{h})^2   \bv{E}_\ell^n \right. \\
& \left. +  c^2 (2 {\omega_{pe}^2}+{\omega_{pm}^2})  (\curlb^{(2)}_{h,\ast}  \curlb^{(2)}_{h})\bv{E}_\ell^n +{\omega_{pe}^4}  \bv{E}_\ell^n  \right.  \left. + c^4  (\curlb^{(2)}_{h} \curlb^{(2)}_{h,\ast}  \curlb^{(2)}_{h})\bv{K}_{\ell+1/2}^n  \right. \\
& \left. +c^2 ( {\omega_{pe}^2}+{\omega_{pm}^2})\curlb^{(2)}_{h,\ast} \bv{K}_{\ell+1/2}^n  \right] =0,  \label{MD_full_a}\\
\end{split}  &  \\
  \begin{split} 
&  \dsp  \delta_{tt} \bv{K}_{\ell + 1/2}^n + {\omega_{pm}^2}  \bv{K}_{\ell+1/2}^n + \omega_{pm}^2 \curlb^{(4)}_{h}  \bv{E}_\ell^n - \frac{\Delta t^2}{12}  \left[ c^2 \omega_{pm}^2 (\curlb^{(2)}_{h}  \curlb^{(2)}_{h,\ast} \curlb^{(2)}_{h} ) \bv{E}_\ell^n \right. \\
& \left. + \omega_{pm}^2 ( {\omega_{pe}^2}+{\omega_{pm}^2}) \curlb^{(2)}_{h}\bv{E}_\ell^n \right.  \left. + c^2 {\omega_{pm}^2}  (\curlb^{(2)}_{h}  \curlb^{(2)}_{h,\ast})  \bv{K}_{\ell+1/2}^n   + \omega_{pm}^4 \bv{K}_{\ell+1/2}^n \right] =0, \label{MD_full_b}\\
\end{split} &\\
  \begin{split} 
&  \dsp  \delta_{tt} \bv{H}_\ell^n + c^2 (\curlb^{(4)}_{h,\ast} \curlb^{(4)}_{h}) \bv{H}_\ell^n  + {\omega_{pm}^2}  \bv{H}_\ell^n   -c^2 \curlb^{(4)}_{h,\ast} \bv{J}_{\ell+1/2}^n  - \frac{\Delta t^2}{12}  \left[  c^4 (\curlb^{(2)}_{h,\ast}  \curlb^{(2)}_{h})^2   \bv{H}_\ell^n \right. \\
& \left. + c^2 ( 2{\omega_{pm}^2}+{\omega_{pe}^2}) (\curlb^{(2)}_{h,\ast}  \curlb^{(2)}_{h})\bv{H}_\ell^n +{\omega_{pm}^4}  \bv{H}_\ell^n \right. \\
& \left.   -c^4 \ (\curlb^{(2)}_{h} \curlb^{(2)}_{h,\ast}  \curlb^{(2)}_{h})\bv{J}_{\ell+1/2}^n \right.  \left. -c^2 ( {\omega_{pm}^2}+{\omega_{pe}^2})\curlb^{(2)}_{h,\ast} \bv{J}_{\ell+1/2}^n\right] =0, \\
\end{split} &\\
  \begin{split} 
&  \dsp  \delta_{tt} \bv{J}_{\ell + 1/2}^n+\omega_{pe}^2 \bv{J}_{\ell+1/2}^n -\omega_{pe}^2 \curlb^{(4)}_{h}  \bv{H}_\ell^n - \frac{\Delta t^2}{12}  \left[ - c^2 \omega_{pe}^2  (\curlb^{(2)}_{h}  \curlb^{(2)}_{h,\ast} \curlb^{(2)}_{h} )  \bv{H}_\ell^n  \right. \\
& \left. -\omega_{pe}^2 ( {\omega_{pm}^2}+{\omega_{pe}^2})\curlb^{(2)}_{h} \bv{H}_\ell^n \right.  \left.  + c^2 {\omega_{pe}^2}   (\curlb^{(2)}_{h}  \curlb^{(2)}_{h,\ast}) \bv{J}_{\ell+1/2}^n   + \omega_{pe}^4 \bv{J}_{\ell+1/2}^n \right] =0, \\
\end{split} &
\end{align}
\end{subequations}
where $\displaystyle \dsp  \delta_{tt}$ is the three point time stencil defined in \eqref{eq:time_disc}. The discrete system \eqref{MD_full} is then fourth order in time and space. We will refer to it as a (4,4)-scheme. 
\begin{remark}
In the numerical examples we will compare with the so-called $(2,2)$-FDM scheme and $(2,4)$-FDM scheme. The $(2,4)$-scheme is obtained from  \eqref{MD_full} by dropping the terms multiplied by $\frac{\Delta t ^2}{12}$, while the $(2,2)$-scheme is obtained from  \eqref{MD_full} by dropping the terms multiplied by $\frac{\Delta t ^2}{12}$ and replacing the fourth order discrete operators by second order discrete operators {defined in \eqref{ops1} and \eqref{ops2}.}
\end{remark}
\vspace{-0.3cm}
\begin{remark}
Contrary to Yee like schemes, a specific class of FDTD methods that stagger both in time and space, our approach uses only staggering in space.
\end{remark}
\section{Stability {and Discrete Energy Analysis} (in 1D)}\label{sec:stability}
For simplicity, we detail the energy estimates for the semi-discrete and fully-discrete schemes in one dimension, and for the pair $(\bv{E}, \bv{K})$. Similar results hold for the other pair, and similarly for 2D and 3D. We now consider generically $E,K$, and the scalar $\curl$ operator. 
\subsection{Semi-discrete energy estimates}
The semi-discrete system for $(E,K)$ in one dimension satisfies the following energy result:
\begin{theorem}[Semi-discrete Energy Estimate]
  Assume $E_h \in C^1((0,T); V_{0,h}(\Omega))$, $K_h \in C^1((0,T); V_{\frac{1}{2},h}(\Omega))$. The spatially discrete system \eqref{MD_semi} satisfies the {\it energy conservation} result
  \begin{equation}
    \frac{d\mathcal{E}_{h,EK}}{dt} = 0,
  \end{equation}
  where the energy $\mathcal{E}_{h,EK}$ is defined as 
\begin{equation}
\begin{aligned}
  \mathcal{E}_{h,EK} 
  =&  \frac{1}{2}\left(  \frac{1}{c^2}||\partial_t E_h||_{0,h}^2 
  +\frac{1}{\omega_{pm}^2}||\partial_t K_h||_{\frac{1}{2},h}^2 \right.  + \frac{\omega_{pe}^2}{c^2}||E_h||_{0,h}^2 
   \left. +||K_h+ \mathrm{curl}_h^{(4)}E_h||_{\frac{1}{2},h}^2 \right).
  \end{aligned}
\end{equation}
\end{theorem}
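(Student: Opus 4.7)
The plan is to mimic the continuous energy argument from Theorem \ref{th:MD1} at the discrete level, relying on the fact that the pairing between the fourth order spatial operators $\mathrm{curl}_h^{(4)}$ and $\mathrm{curl}_{h,\ast}^{(4)}$ is a discrete adjoint pair under the staggered $\ell^2$ inner products. More precisely, I will first establish the discrete summation-by-parts identity
\begin{equation*}
\langle \mathrm{curl}_{h,\ast}^{(4)} u_h, v_h \rangle_{0,h} = \langle u_h, \mathrm{curl}_h^{(4)} v_h \rangle_{\frac{1}{2},h},
\end{equation*}
for $v_h \in V_{0,h}$ and $u_h \in V_{\frac{1}{2},h}$. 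This follows from an index shift in the definitions of $\mathcal{F}_{h,m}^{(4)}$ and $\mathcal{F}_{h,m\ast}^{(4)}$ together with the periodic boundary conditions, which cancel all boundary contributions.

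Next, I take the $\langle\cdot,\cdot\rangle_{0,h}$ inner product of the semi-discrete equation for $E_h$ with $\tfrac{1}{c^2}\partial_t E_h$, and the $\langle\cdot,\cdot\rangle_{\frac{1}{2},h}$ inner product of the semi-discrete equation for $K_h$ with $\tfrac{1}{\omega_{pm}^2}\partial_t K_h$. In each identity I recognize the terms $\langle \partial_{tt} X_h, \partial_t X_h\rangle$ and $\langle X_h, \partial_t X_h\rangle$ as $\tfrac{1}{2}\tfrac{d}{dt}\|\partial_t X_h\|^2$ and $\tfrac{1}{2}\tfrac{d}{dt}\|X_h\|^2$, respectively, and I apply the summation-by-parts identity above to the term $\langle \mathrm{curl}_{h,\ast}^{(4)}\mathrm{curl}_h^{(4)} E_h, \partial_t E_h\rangle_{0,h}$ to turn it into $\tfrac{1}{2}\tfrac{d}{dt}\|\mathrm{curl}_h^{(4)} E_h\|_{\frac{1}{2},h}^2$.

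Adding the two resulting identities, the mixed terms on the right-hand sides combine into
\begin{equation*}
\langle K_h, \mathrm{curl}_h^{(4)}\partial_t E_h \rangle_{\frac{1}{2},h} + \langle \mathrm{curl}_h^{(4)} E_h, \partial_t K_h \rangle_{\frac{1}{2},h} = \frac{d}{dt}\langle K_h, \mathrm{curl}_h^{(4)} E_h \rangle_{\frac{1}{2},h},
\end{equation*}
after one more application of summation by parts to rewrite $\langle \mathrm{curl}_{h,\ast}^{(4)} K_h, \partial_t E_h\rangle_{0,h}$. Combining this cross term with the already identified $\tfrac{1}{2}\tfrac{d}{dt}\|\mathrm{curl}_h^{(4)} E_h\|_{\frac{1}{2},h}^2 + \tfrac{1}{2}\tfrac{d}{dt}\|K_h\|_{\frac{1}{2},h}^2$ yields $\tfrac{1}{2}\tfrac{d}{dt}\|K_h + \mathrm{curl}_h^{(4)} E_h\|_{\frac{1}{2},h}^2$, which completes the energy identity $\frac{d\mathcal{E}_{h,EK}}{dt}=0$.

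The only non-routine step is the discrete summation-by-parts identity: one must verify carefully that the five-point stencil of $\mathcal{F}_{h,m}^{(4)}$ is the adjoint of the five-point stencil of $\mathcal{F}_{h,m\ast}^{(4)}$ across the two different grids $V_{0,h}$ and $V_{\frac{1}{2},h}$, with the periodic boundary conditions precisely closing the telescoping sums that arise from each of the two coefficients $9/8$ and $-1/24$ separately. Once this adjointness is in hand, the rest of the proof is a direct discrete transcription of the continuous energy argument.
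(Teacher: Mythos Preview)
Your proposal is correct and follows exactly the approach the paper indicates: the paper's proof is only a one-line sketch stating that the result ``is based on the fact that under the assumption of periodic boundary conditions, the operator $-\mathrm{curl}_{h,\ast}^{(4)}$ is the adjoint of the discrete operator $\mathrm{curl}_h^{(4)}$ with respect to the $\ell^2$ scalar product,'' and you have simply fleshed out the standard energy computation that follows from this. The only point to watch is the sign in your adjoint identity: the paper records it as $(\mathrm{curl}_h^{(4)})^*=-\mathrm{curl}_{h,\ast}^{(4)}$ rather than $+\mathrm{curl}_{h,\ast}^{(4)}$, but since the 1D curl is defined only ``up to a sign'' in the paper, either convention can be made consistent with the semi-discrete equations and the energy argument goes through unchanged.
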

For simplicity we denote {the discrete norm $\Vert \cdot \Vert_{\frac{1}{2},h,m} = \Vert \cdot \Vert_{\frac{1}{2},h} $}, without making $m$ precise: $m$ will be fixed by the considered polarization.
The proof is based on the fact that under {the assumption of periodic boundary conditions,} the operator $-\curl_{h,\ast}^{(4)}$ is the adjoint of the discrete operator $\curl_h^{(4)}$ with respect to the  $\ell^2$ scalar product. 
\subsection{Fully-Discrete Energy Estimates} 
The full fourth-order discretization of the one dimensional version of \eqref{MD_full_a}-\eqref{MD_full_b} can be rewritten in vector form as
	\begin{align}
	\ds \label{MD_full_mat2}
	\mathcal{P} \delta_{tt} \bv{W}_j^n 
	+ \mathcal{A}_1 \bv{W}_j^n 
	- \frac{\Delta t^2 c^2}{12} \mathcal{A}_2 \bv{W}_j^n = \textbf{0},
	\end{align}

	\noindent where $\bv{W}_j^n = (E_j^n, K_{j+\frac{1}{2}}^n)^t$, and the matrix operators $\mathcal{P}, \mathcal{A}_1$ and $ \mathcal{A}_2$ are defined by

		\begin{subequations}
			\begin{align}
						&\mathcal{A}_1 = 
			\begin{bmatrix} \ds
			- \curl_{h,\ast}^{(4)} \curl_{h}^{(4)} + \frac{\omega_{pe}^2}{c^2} 
			& - \curl_{h,\ast}^{(4)}  \\
			\curl_{h}^{(4)}& 1
			\end{bmatrix},	
			&\mathcal{P} = 
			\begin{bmatrix} \ds
			1/c^2 & 0 \\ 0 & \ds 1/\omega_{pm}^2
			\end{bmatrix},
			\, 
			\mathcal{A}_2
			=  \begin{bmatrix} a_{11} & a_{12} \\ a_{21} & a_{22}			 
			\end{bmatrix},
						\end{align}
		\end{subequations}
		with 
	\[ \begin{aligned}
	& a_{11} := \dsp 	(\curl_{h,\ast}^{(2)} \curl_{h}^{(2)} )^2- \left( 2 \omega_{pe}^2 + \omega_{pm}^2\right) \frac{\curl_{h,\ast}^{(2)} \curl_{h}^{(2)} }{c^2} + \frac{\omega_{pe}^4}{c^4},\\
	& a_{12} := \dsp	\curl_{h,\ast}^{(2)} \curl_{h}^{(2)} \curl_{h,\ast}^{(2)}  - \left( \omega_{pe}^2 + \omega_{pm}^2\right)\frac{\curl_{h,\ast}^{(2)} }{c^2},   \\
	&{
	a_{21} :=	\ds 
	-\curl_{h}^{(2)} \curl_{h,\ast}^{(2)} \curl_{h}^{(2)} 
	+  \left( \omega_{pe}^2 + \omega_{pm}^2\right)\frac{\curl_{h}^{(2)}}{c^2}  },  \\
	& {
	a_{22} := \ds 
	-\curl_{h}^{(2)} \curl_{h,\ast}^{(2)}  + \frac{\omega_{pm}^2}{c^2}. } 
	\end{aligned} 
	\]

\noindent We define discrete inner product and norm as follows. For any $\bv{\widehat{W}}^n_j = (\widehat{E}^n_j,\widehat{K}^n_{j+\frac{1}{2}})^t,\,
\bv{\widetilde{W}}^n_j = (\widetilde{E}^n_j,\widetilde{K}^n_{j+\frac{1}{2}})^t $, $n \in \llbracket 0, N\rrbracket$, $j \in \llbracket 0, M-1\rrbracket$, with a uniform mesh step size $h$, $ \langle \bv{\widehat{W}}^n, \bv{\widetilde{W}}^n \rangle_h  :=  h \sum_{j=0}^{M-1} 	\langle \bv{\widehat{W}}^n_j, \bv{\widetilde{W}}^n_j \rangle  = \langle \widehat{E}^n, \widetilde{E}^n \rangle_{0,h} + \langle \widehat{K}^n, \widetilde{K}^n \rangle_{\frac{1}{2},h}$, and $\Vert\bv{\widehat{W}}^n \Vert^2_h := \Vert \widehat{E}^n \Vert_{0,h}^2 + \Vert \widehat{K}^n \Vert_{\frac{1}{2},h}^2$.
\begin{theorem}[Fully-discrete Energy Estimate] Assuming periodic boundary conditions for all function fields on $\Omega$,
 the fully-discrete system satisfies the estimate
  \begin{equation} \label{en1}
   \delta_t \mathcal{E}_h^n =\frac{1}{\Delta t} \left[
	\mathcal{E}_h^{n+\frac{1}{2}}
	-
	\mathcal{E}_h^{n-\frac{1}{2}}
	\right]= 0,
  \end{equation}
  where the discrete quantity $\mathcal{E}_h^{n+\frac{1}{2}}$ is defined as 
\begin{equation}
\begin{aligned}
\hspace*{-10pt}\mathcal{E}_h^{n+\frac{1}{2}} 
	&= 
	\frac{1}{2}
	\bigg[
	\frac{1}{c^2}\Vert \delta_t E^{n+\frac{1}{2}} \Vert_{0,h}^2
	+ \frac{1}{\omega_{pm}^2}\Vert \delta_t K^{n+\frac{1}{2}} \Vert_{\frac{1}{2},h}^2   + \langle \mathcal{A}_1 \bv{W}^{n+1}, \bv{W}^{n} \rangle_h
	- \frac{\Delta t^2 c^2}{12} \langle \mathcal{A}_2\bv{W}^{n+1}, \bv{W}^{n} \rangle_h
	\bigg],  \label{eng1}
  \end{aligned}
\end{equation}
with 
$\bv{W}^{n}  = (E^n,K^n)^t$.
Under the  {Courant-Friedrichs-Lewy (CFL) condition $\dsp \frac{ c \Delta t }{h} <1$, we have that $\mathcal{E}_h^{n+\frac{1}{2}}  \geq 0$, and is then called a {\it discrete energy}, which is conserved in time. }
\end{theorem}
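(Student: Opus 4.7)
The plan is to carry out a standard energy method argument for the second-order-in-time matrix formulation \eqref{MD_full_mat2}, proceeding in three stages: self-adjointness of the spatial operators, the conservation identity, and positivity under CFL.

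First, I would verify that the matrix operators $\mathcal{P}$, $\mathcal{A}_1$, and $\mathcal{A}_2$ are self-adjoint with respect to the compound inner product $\langle \cdot, \cdot\rangle_h$. The key building block is the fact (already used in the semi-discrete analysis) that under periodic boundary conditions $-\curl_{h,\ast}^{(p)}$ is the adjoint of $\curl_h^{(p)}$ for $p=2,4$. The block structure of $\mathcal{A}_1$ then reveals that its off-diagonal blocks $-\curl_{h,\ast}^{(4)}$ and $\curl_h^{(4)}$ are adjoints of one another, and a similar inspection—using that the composition of an operator and its adjoint is self-adjoint—gives self-adjointness of the entries $a_{11},a_{22}$ and adjointness of $a_{12},a_{21}$ in $\mathcal{A}_2$.

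Second, to derive \eqref{en1}, I would take the discrete inner product of \eqref{MD_full_mat2} with the centered time difference $\frac{\bv{W}^{n+1}-\bv{W}^{n-1}}{2\Delta t}$. Using the self-adjointness of $\mathcal{P}$ together with the splitting $\bv{W}^{n+1}-\bv{W}^{n-1} = \Delta t\,(\delta_t\bv{W}^{n+1/2}+\delta_t\bv{W}^{n-1/2})$, the kinetic term telescopes into
\[
\tfrac{1}{2\Delta t}\Bigl[\tfrac{1}{c^2}\|\delta_t E^{n+1/2}\|_{0,h}^2+\tfrac{1}{\omega_{pm}^2}\|\delta_t K^{n+1/2}\|_{\frac{1}{2},h}^2\Bigr]-(\text{same at }n-1/2).
\]
For the $\mathcal{A}_1$ and $\mathcal{A}_2$ contributions I would apply the telescoping identity, valid for any self-adjoint $\mathcal{B}$,
\[
\langle \mathcal{B}\bv{W}^n,\bv{W}^{n+1}-\bv{W}^{n-1}\rangle_h = \langle \mathcal{B}\bv{W}^{n+1},\bv{W}^n\rangle_h - \langle \mathcal{B}\bv{W}^n,\bv{W}^{n-1}\rangle_h,
\]
which exactly produces the $\mathcal{A}_1$ and $-\frac{\Delta t^2 c^2}{12}\mathcal{A}_2$ pieces of $\mathcal{E}_h^{n+1/2}-\mathcal{E}_h^{n-1/2}$. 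Summing everything yields $\delta_t\mathcal{E}_h^n = 0$.

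Third, for positivity of $\mathcal{E}_h^{n+1/2}$ under the CFL condition, I would rewrite each mixed-time inner product using the symmetric identity
\[
\langle \mathcal{B}\bv{W}^{n+1},\bv{W}^n\rangle_h = \tfrac{1}{2}\langle\mathcal{B}\bv{W}^{n+1},\bv{W}^{n+1}\rangle_h+\tfrac{1}{2}\langle\mathcal{B}\bv{W}^n,\bv{W}^n\rangle_h -\tfrac{\Delta t^2}{2}\langle\mathcal{B}\delta_t\bv{W}^{n+1/2},\delta_t\bv{W}^{n+1/2}\rangle_h.
\]
Applied to $\mathcal{A}_1$, the first two pieces reproduce the discrete analogues of the continuous potential energy (namely $\|\curl_h^{(4)}E+K\|_{\frac{1}{2},h}^2+\frac{\omega_{pe}^2}{c^2}\|E\|_{0,h}^2$ at two consecutive times), all non-negative; the third piece subtracts $\frac{\Delta t^2}{2}\langle\mathcal{A}_1\delta_t\bv{W}^{n+1/2},\delta_t\bv{W}^{n+1/2}\rangle_h$ from the kinetic energy. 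The analogous identity applied to $-\frac{\Delta t^2 c^2}{12}\mathcal{A}_2$ produces an additional $O(\Delta t^4)$ correction of controllable sign, together with a quadratic form in $\delta_t\bv{W}^{n+1/2}$ that combines with the $\mathcal{A}_1$ correction to give a modified kinetic term. Positivity of this modified kinetic term is the main obstacle: it reduces to showing that the operator $\mathcal{P}-\Delta t^2\mathcal{A}_1+\frac{\Delta t^4 c^2}{12}\mathcal{A}_2$, restricted to the kinetic variables, is non-negative. Using the operator bounds $\|\curl_h^{(p)}\|,\|\curl_{h,\ast}^{(p)}\|\le C_p/h$ (obtained from the explicit stencils in \eqref{op2s}--\eqref{op4s} and a direct Fourier symbol estimate, noting that $C_2=2$ and $C_4=\tfrac{9}{8}+\tfrac{1}{24}\cdot 2$), one controls $\Delta t^2\|\mathcal{A}_1\|$ and $\Delta t^4\|\mathcal{A}_2\|$ by appropriate powers of $c\Delta t/h$, and the condition $c\Delta t/h<1$ turns out to be sufficient (the $\omega_{pe},\omega_{pm}$ contributions being lower-order and absorbable into the leading wave-speed terms). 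This last sharpness check—verifying that the cross terms between $E$ and $K$ coming from the off-diagonal entries of $\mathcal{A}_2$ do not spoil the CFL $c\Delta t/h<1$—is the step I expect to require the most care, and it is where the analysis genuinely departs from the scalar-wave case treated in \cite{JoCo}.
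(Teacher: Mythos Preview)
Your approach to the conservation identity---testing \eqref{MD_full_mat2} against the centered difference $(\bv{W}^{n+1}-\bv{W}^{n-1})/(2\Delta t)$, telescoping the kinetic term via the diagonal $\mathcal{P}$, and invoking self-adjointness of $\mathcal{A}_1,\mathcal{A}_2$ to telescope the potential terms---is exactly the paper's proof. The paper simply asserts that $\mathcal{A}_1$ and $\mathcal{A}_2$ are self-adjoint rather than verifying it blockwise as you do, but the substance is identical.

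Where you and the paper diverge is on the positivity claim. The paper's written proof stops immediately after establishing $\delta_t\mathcal{E}_h^n=0$; the non-negativity of $\mathcal{E}_h^{n+1/2}$ under the CFL condition $c\Delta t/h<1$ is stated in the theorem but not argued at all in the proof environment. Your third stage---the symmetric rewriting $\langle\mathcal{B}\bv{W}^{n+1},\bv{W}^n\rangle_h$ in terms of quadratic forms at single time levels plus a correction in $\delta_t\bv{W}^{n+1/2}$, followed by operator-norm bounds on the discrete curls---therefore goes beyond what the paper actually supplies. The outline you give is the standard route and is sound in spirit; as you correctly flag, the genuine work lies in controlling the off-diagonal contributions of $\mathcal{A}_2$ and absorbing the $\omega_{pe},\omega_{pm}$ terms, and nailing down the sharp constant so that $c\Delta t/h<1$ (rather than some smaller CFL number) suffices. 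The paper does not address any of this, so on that point your sketch is already more complete than the published argument.
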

	\begin{proof}
	\noindent Multiply \eqref{MD_full_mat2} by  $\ds 
	\widetilde{\bv{W}_j^{n} } = \frac{1}{2\Delta t} \left( \bv{W}_j^{n+1} - \bv{W}_j^{n-1} \right) $,
	\noindent we get 
\begin{align}
 \label{MD_full_mat3}
\begin{split}
\langle  \mathcal{P}  \delta_{tt} \bv{W}^n , \widetilde{\bv{W}^{n} } \rangle_h
	+ \langle \mathcal{A}_1 \bv{W}^n , & \widetilde{\bv{W}^{n} }  \rangle_h\\
	- \frac{\Delta t^2 c^2}{12}  & \langle \mathcal{A}_2 \bv{W}^n,  \widetilde{\bv{W}^{n} } \rangle_h = 0.
\end{split}	\end{align}

	\noindent It is then straight forward to show that 
	\begin{equation}
		\label{eq1}
		\begin{aligned}
		\ds 
		\langle \mathcal{P}\ \delta_{tt} & \bv{W}^n ,  \widetilde{\bv{W}^{n} } \rangle_h
		= \frac{1}{2\Delta t}
		\left(
		\Vert \mathcal{P} \delta_{t} \bv{W}^{n+\frac{1}{2}}  \Vert^2_h 
		-
		\Vert \mathcal{P} \delta_{t} \bv{W}^{n-\frac{1}{2}}  \Vert^2_h 
		\right) , \\
		&= \frac{1}{2\Delta t}
		\left(
		\frac{1}{c^2}\Vert \delta_t E^{n+\frac{1}{2}} \Vert_{0,h}^2
		+ \frac{1}{\omega_{pm}^2}\Vert \delta_t K^{n+\frac{1}{2}} \Vert_{\frac{1}{2},h}^2 \right.  \left. - \frac{1}{c^2}\Vert \delta_t E^{n-\frac{1}{2}} \Vert_{0,h}^2
		- \frac{1}{\omega_{pm}^2}\Vert \delta_t K^{n-\frac{1}{2}} \Vert_{\frac{1}{2},h}^2
		\right).
		\end{aligned}
	\end{equation}
		
	\noindent As both operators $\mathcal{A}_1$ and $\mathcal{A}_2$ are self-adjoint, the following identities hold: for $i=1,2$
	\begin{subequations}
		\label{eq2}
		\begin{align*}
		\ds
		\langle \mathcal{A}_i\bv{W}^n ,  & \widetilde{\bv{W}^n} \rangle_h =   \frac{1}{2\Delta t} 
		\left( 
		\langle \mathcal{A}_i\bv{W}^{n+1}, \bv{W}^{n} \rangle_h
		-
		\langle \mathcal{A}_i\bv{W}^n, \bv{W}^{n-1} \rangle_h 
		\right) .
		\end{align*}
	\end{subequations}

	Then one obtains the result by substituting into \eqref{MD_full_mat3}.
	\end{proof}

\section{Numerical results}\label{sec:num}
\subsection{Results in 1D}
In this section, we consider $\bv{E} = (0,E,0)$, and $\bv{K} = (0,0,K)$, with invariance of the fields with respect to $y$ and $z$. The second order Maxwell-Drude model reduces to the system
\begin{equation} \label{EM1D_EK}
\begin{aligned}
& \partial_{tt}E - c^2 \partial_{xx}E + {\omega_{pe}^2}  E=  c^2 \partial_x K, \\
& \partial_{tt} K  +\omega_{pm}^2  K = -\omega_{pm}^2 \partial_x E .\\
\end{aligned}
\end{equation} 
We consider the domain $\Omega = [0, 1]$, {and in the numerical examples we set} $\varepsilon_0 = 5$, $\mu_0 = 0.2$. We define an exact solution for \eqref{EM1D_EK} (with periodic boundary conditions) of the form
\begin{subequations}
 \label{exact_sol}
\begin{align} \ds
E(x,t) &= \frac{1}{\omega} \sin(\omega \pi t) \sin(k \pi x),  \\
K(x,t) &= \frac{\mu_0\omega_{pm}^2}{\pi \omega} \sin(\omega \pi t) \cos(k \pi x).
\end{align}
\end{subequations}

\noindent {Such solution satisfies the Maxwell-Drude model in particular for the parameters $k = 2,\, \omega_{pe} = 26.63199$ and
\begin{subequations}
	\begin{align} \ds
		& \hspace*{-9pt}\omega_{pm} = \sqrt{\frac{1}{\mu_0} \left( \frac{\omega_{pe}^2}{c^2(\varepsilon_0 - k)} - k\pi^2 \right)}, \, \omega = \frac{\omega_{pe}}{\pi}\sqrt{\frac{\varepsilon_0}{\varepsilon_0 - k}}.
	\end{align}
\end{subequations}  }
 \noindent  The largest time step and mesh sizes are taken to be $\Delta t = 0.02$ and $h = \Delta x = c \Delta t/\nu$, respectively, where $\nu < 1$ is the  Courant number in the CFL condition. These values are
 successively decreased by half to obtain the corresponding rates in six simulation runs.

 \subsubsection{Numerical Computation of the Discrete Energy}
 
Substituting the exact solution \eqref{exact_sol} into \eqref{eq:E_EK} one obtains
\begin{equation}
	\label{Exact_eng}
\begin{aligned}
 \hspace*{-11pt} \mathcal{E}_{EK} =  &\frac{1}{2}
\bigg( \frac{1}{c^2} \Vert\partial_t E  \Vert^2
+\frac{1}{\omega_{pm}^2} \Vert \partial_t K  \Vert^2 + \frac{\omega_{pe}^2}{c^2}\Vert E \Vert^2 
+ \Vert  K  + \partial_x E \Vert^2 
\bigg) = 13.30148848500039.
\end{aligned}
\end{equation}

\noindent  We will compare our new (4,4)-scheme with  the (2,4)-scheme and the (2,2)-scheme. To validate our method we will compare for each scheme the computed discrete energy with \eqref{Exact_eng}. We define the {Relative Energy Error} as {the error in the discrete energy with respect to the continuous energy as}{
\begin{align} \label{Rel_eng} \ds
{	
\Theta^{n+\frac{1}{2}}_{ij} = \displaystyle \left|\frac{ \mathcal{E}^{n+ \frac{1}{2}}_{h, ij} - \mathcal{E}_{EK} }{\mathcal{E}_{EK} } \right|, \quad ij \in \lbrace 22, 24, 44 \rbrace .}
\end{align}
}
 \noindent
 The discrete energy of (4,4)-scheme, $	\mathcal{E}^{n+\frac{1}{2}}_{h,44} $, is defined in \eqref{eng1}, while for the (2,4)- and (2,2)- schemes we have 
 \begin{subequations}
 	\begin{align} \ds
 	\mathcal{E}^{n+\frac{1}{2}}_{h,24} 
 	= 	\frac{1}{2}&	
 	\Big(
 	\frac{1}{c^2}\Vert \delta_t E^{n+\frac{1}{2}} \Vert_{0,h}^2	+ \frac{1}{\omega_{pm}^2}\Vert \delta_t K^{n+\frac{1}{2}} \Vert_{\frac{1}{2},h}^2   +\langle \mathcal{A}_1 \bv{W}^{n+1}, \bv{W}^{n} \rangle_h 	\Big),   \\
 	%
 	\mathcal{E}^{n+\frac{1}{2}}_{h,22} 
 	= 	\frac{1}{2}&
 	\Bigl(
 	\frac{1}{c^2}\Vert \delta_t E^{n+\frac{1}{2}} \Vert_{0,h}^2
 	+ \frac{1}{\omega_{pm}^2}\Vert \delta_t K^{n+\frac{1}{2}} \Vert_{\frac{1}{2},h}^2 	+ \Big \langle \mathcal{F}^{(2)}_{h} E^{n+1} + K^{n+1} , \mathcal{F}^{(2)}_{h} E^n + K^n  \Big\rangle_{\frac{1}{2},h}   	+ \frac{\omega_{pe}^2}{c^2} \langle E^{n+1}, E^n   \rangle_{0,h}    
 	\Bigr).
 	\end{align}
 \end{subequations}
 
Table \ref{Table1} provides the Relative Energy Errors for the three schemes, and for various CFL numbers $\nu$. Results validate the energy conservation.

 \begin{table} [h!]
 	\centering
 	\begin{tabular}{ |C{1.cm}|C{2.cm}|C{2.cm}|C{2.cm}| }
 		\hline
 		\multicolumn{4}{|c|}{Relative Energy Error, {$\Theta^{n+\frac{1}{2}}_{ij}$}, for $\Delta t = 0.02$} \T \B  \\
 		\hline
 		\textbf{$\nu$}   & (4,4)-scheme & (2,4)-scheme & (2,2)-scheme  \T \B   \\
 		\hline  
 		0.2 & 7.8372e-16  &  6.2698e-16  & 7.8372e-16 \\
 		\hline
 		0.5 & 7.8372e-16  & 4.7023e-16  &6.2698e-16 \\
 		\hline 
 		0.8 &  9.4047e-16
 		  & 3.1349e-16
 		    &  4.7023e-16 \\
 		\hline   
 		\multicolumn{4}{|c|}{Relative Energy Error, {$\Theta^{n+\frac{1}{2}}_{ij}$}, for $\Delta t = 0.01$} \T \B  \\
 		\hline
 		\textbf{$\nu$}   & (4,4)-scheme & (2,4)-scheme & (2,2)-scheme  \T \B   \\
 		\hline  
 		0.2 & 1.3893e-15  & 1.2504e-15  & 8.3361e-16 \\
 		\hline
 		0.5 & 1.1115e-15  & 1.5283e-15  & 8.3361e-16 \\
 		\hline 
 		0.8 &  1.6672e-15
 		& 1.2504e-15
 		&  8.3361e-16 \\
 		\hline  
	\end{tabular} 
 	\vspace{.3cm}
 	\caption{Relative Energy Error, {$\Theta^{n+\frac{1}{2}}_{ij}$, defined in \eqref{Rel_eng}} for the 1D Maxwell-Drude Metamaterial.}
 	\label{Table1}
 	 \vspace{-0.5cm}
 \end{table}	

  \subsubsection{Convergence Rate}
 
 We evaluate the convergence rate of the (4,4)-scheme by computing the discrete errors:
 \begin{subequations}
 	\label{conrate}
 \begin{align}
 \ds
 \text{Err(E)} &= \max_{0 \leq n \leq N}\Vert E^n - E(\cdot, t^n) \Vert_{0,h}, \\
  \text{Err(K)} &= \max_{0 \leq n \leq N}\Vert K^n- K(\cdot,t^n) \Vert_{\frac{1}{2},h}.
 \end{align}
 \end{subequations} 
\noindent Figure \ref{fig1}, Tables \ref{Table2} and \ref{Table3} show that the (4,4)-scheme is indeed fourth order for the pair $(E,K)$ whereas the (2,4)-scheme and (2,2)-scheme are second order, { for the CFL $\nu =2$. Note that the rates are computed with respect to $M$, not $\Delta x$. Similar results hold for other Courant numbers $\nu$.}
	\begin{table}[h!]
	\centering
\begin{tabular}{ |C{2cm}|C{2cm}|C{2cm}|C{2cm}| }
		\hline
		\multicolumn{4}{|c|}{  {Err(E)}  with CFL $\nu = 0.2$} \T \B  \\
		\hline
		$\Delta t$ & $\Delta x$   & \multicolumn{2}{c|}{(4,4)-scheme}    \T \B   \\
		\cline{3-4}
		& & \textbf{Error}  & \textbf{Rate} \\
		\hline		 
		2.000e-02 & 1.000e-01    
		          & 6.280e-04      & - \\
		\hline    
		1.000e-02 & 5.000e-02   
					& 3.730e-05      & -4.073 \\
		\hline 
		5.000e-03 & 2.500e-02   
					& 2.303e-06     & -4.018 \\
		\hline		
		2.500e-03 & 1.250e-02   
					& 1.435e-07      & -4.005 \\
		\hline	
		1.250e-03 & 6.250e-03   
					& 8.959e-09      & -4.001 \\
		\hline	
		6.250e-04 & 3.125e-03  
					& 5.600e-10      & -4.000\\
		\hline 			
	\end{tabular} 
	\vfill
\begin{tabular}{ |C{2cm}|C{2cm}|C{2cm}|C{2cm}| }
		\hline
		\multicolumn{4}{|c|}{  {Err(E)}  with CFL $\nu = 0.2$} \T \B  \\
		\hline
		$\Delta t$ & $\Delta x$   & \multicolumn{2}{c|}{(2,4)-scheme}    \T \B   \\
		\cline{3-4}
		& & \textbf{Error}  & \textbf{Rate} \\
		\hline		 
		2.000e-02 & 1.000e-01    
		         & 4.383e-02     & - \\
		\hline    
		1.000e-02 & 5.000e-02   
					& 1.102e-02      & -1.991\\
		\hline 
		5.000e-03 & 2.500e-02   
					& 2.722e-03     & -2.018 \\
		\hline		
		2.500e-03 & 1.250e-02   
					& 6.777e-04      & -2.006\\
		\hline	
		1.250e-03 & 6.250e-03   
					& 1.693e-04      & -2.002\\
		\hline	
		6.250e-04 & 3.125e-03  
					& 4.230e-05      & -2.000\\
		\hline 			
	\end{tabular} 
		\vfill
\begin{tabular}{ |C{2cm}|C{2cm}|C{2cm}|C{2cm}| }
		\hline
		\multicolumn{4}{|c|}{  {Err(E)}  with CFL $\nu = 0.2$} \T \B  \\
		\hline
		$\Delta t$ & $\Delta x$   & \multicolumn{2}{c|}{(2,2)-scheme}    \T \B   \\
		\cline{3-4}
		& & \textbf{Error}  & \textbf{Rate} \\
		\hline		 
		2.000e-02 & 1.000e-01    
		           & 4.070e-02 & -  \\
		\hline    
		1.000e-02 & 5.000e-02   
				 & 1.026e-02      & -1.989  \\
		\hline 
		5.000e-03 & 2.500e-02   
					& 2.538e-03     & -2.015  \\
		\hline		
		2.500e-03 & 1.250e-02   
					& 6.324e-04      & -2.005  \\
		\hline	
		1.250e-03 & 6.250e-03   
					& 1.580e-04      & -2.001  \\
		\hline	
		6.250e-04 & 3.125e-03  
					& 3.948e-05      & -2.000  \\			
		\hline 			
	\end{tabular} 
	\caption{Errors of $E$ for the Maxwell-Drude metamaterial in 1D.}
	\label{Table2}
\end{table}	
 
  	\begin{table}[h!]
 	\centering
 \begin{tabular}{ |C{2cm}|C{2cm}|C{2cm}|C{2cm}| }
 		\hline
 		\multicolumn{4}{|c|}{ {Err(K)} with CFL $\nu = 0.2$} \T \B  \\
 		\hline
 		$\Delta t$ & $\Delta x$   & \multicolumn{2}{c|}{(4,4)-scheme}   \T \B   \\
 		\cline{3-4}
 		& & \textbf{Error}  & \textbf{Rate} \\
 		\hline		 
 		2.000e-02 & 1.000e-01   
 		& 4.360e-02      & - \\
 		\hline    
 		1.000e-02 & 5.000e-02   
 		& 2.591e-03      & -4.073\\
 		\hline 
 		5.000e-03 & 2.500e-02   
 		& 1.599e-04     & -4.018 \\
 		\hline		
 		2.500e-03 & 1.250e-02   
 		& 9.965e-06      & -4.005 \\
 		\hline	
 		1.250e-03 & 6.250e-03   
 		& 6.223e-07      & -4.001 \\
 		\hline	
 		6.250e-04 & 3.125e-03  
 		& 3.890e-08      & -4.000 \\
 		\hline	
 	\end{tabular} 
 	\vfill
\begin{tabular}{ |C{2cm}|C{2cm}|C{2cm}|C{2cm}| }
 		\hline
 		\multicolumn{4}{|c|}{ {Err(K)} with CFL $\nu = 0.2$} \T \B  \\
 		\hline
 		$\Delta t$ & $\Delta x$   & \multicolumn{2}{c|}{(2,4)-scheme}   \T \B   \\
 		\cline{3-4}
 		& & \textbf{Error}  & \textbf{Rate} \\
 		\hline		 
 		2.000e-02 & 1.000e-01   
 		& 3.026e+00      & -\\
 		\hline    
 		1.000e-02 & 5.000e-02   
 		& 7.604e-01      & -1.993\\
 		\hline 
 		5.000e-03 & 2.500e-02   
 		& 1.877e-01     & -2.018 \\
 		\hline		
 		2.500e-03 & 1.250e-02   
 		& 4.674e-02      & -2.006\\
 		\hline	
 		1.250e-03 & 6.250e-03   
 		& 1.167e-02      & -2.002\\
 		\hline	
 		6.250e-04 & 3.125e-03  
 		& 2.918e-03      & -2.000\\
 		\hline	
 	\end{tabular} 
 	\vfill
\begin{tabular}{ |C{2cm}|C{2cm}|C{2cm}|C{2cm}| }
 		\hline
 		\multicolumn{4}{|c|}{ {Err(K)} with CFL $\nu = 0.2$} \T \B  \\
 		\hline
 		$\Delta t$ & $\Delta x$   & \multicolumn{2}{c|}{(2,2)-scheme}   \T \B   \\
 		\cline{3-4}
 		& & \textbf{Error}  & \textbf{Rate} \\
 		\hline		 
 		2.000e-02 & 1.000e-01   
 					& 2.880e+00      & -  \\
 		\hline    
 		1.000e-02 & 5.000e-02   
 					& 7.153e-01      & -2.010  \\
 		\hline 
 		5.000e-03 & 2.500e-02   	
 					& 1.763e-01     & -2.020  \\
 		\hline		
 		2.500e-03 & 1.250e-02   
 					& 4.390e-02      & -2.006  \\
 		\hline	
 		1.250e-03 & 6.250e-03   
			 		& 1.096e-02      & -2.002  \\
 		\hline	
 		6.250e-04 & 3.125e-03  
 					& 2.740e-03      & -2.000  \\	
 		\hline	
 	\end{tabular} 
 	\vfill
 	\caption{Errors of $K$ for the Maxwell-Drude metamaterial in 1D.}
 	\label{Table3}
 \end{table}	
 \begin{figure}[h!]
 	\centering
 	\includegraphics[scale= 0.22]{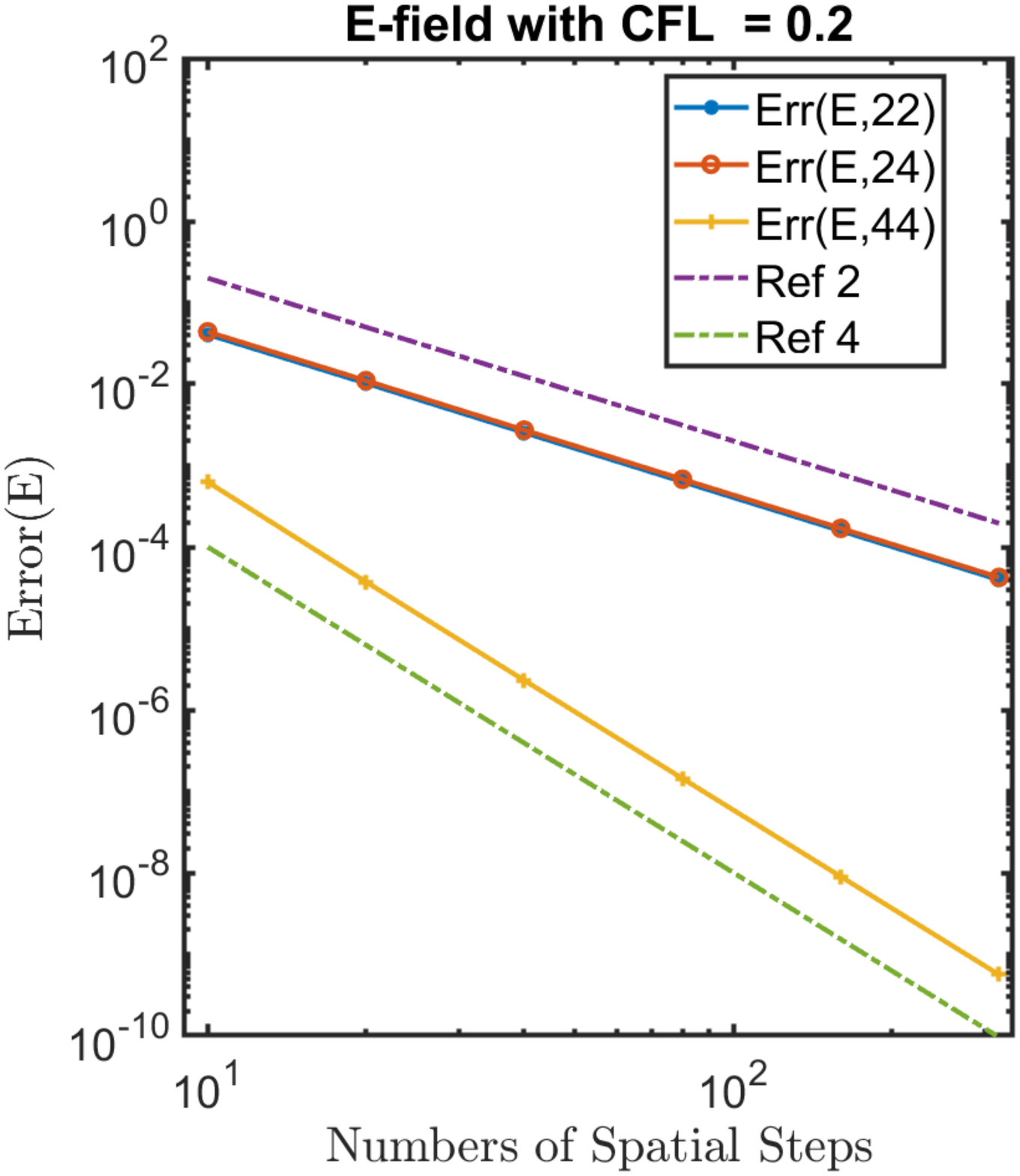} 
 	 	\includegraphics[scale= 0.22]{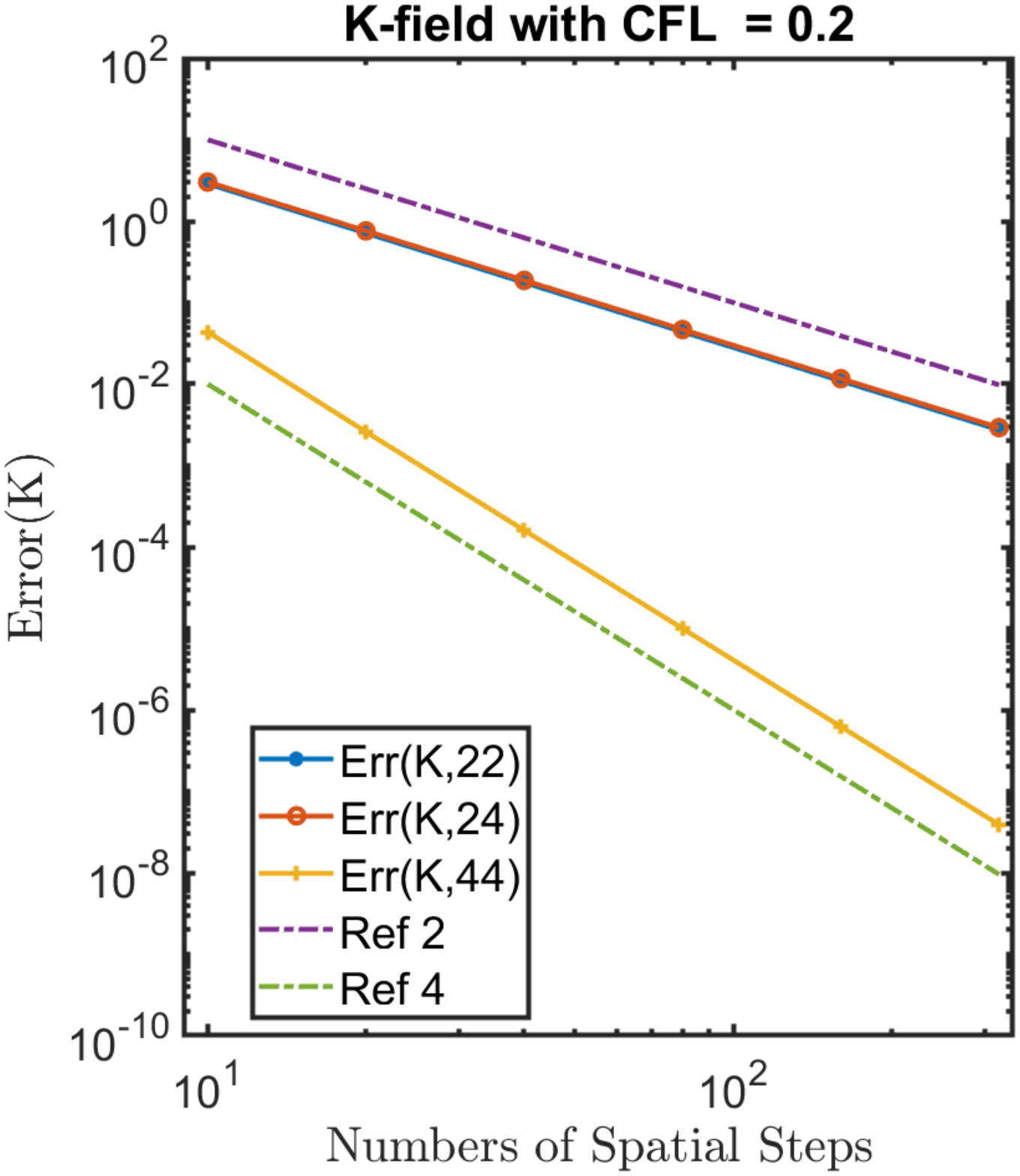} 
 	\caption{ Convergence rates for 1D the Maxwell-Drude metamaterial: for $E$ (left), for $K$ (right).}	
 	\label{fig1}
 \end{figure}
 
 \subsubsection{Long time computation}
We investigate the stability of each scheme over time. We set $\nu= 0.2$, and perform two experiments:
 \begin{align*}
 	\text{Case 1: } & T = 250 \text{ and } \Delta t = 0.02 \text{, then } \Delta x = 0.1 ,\\
 	\text{Case 2: } & T = 50 \text{ and } \Delta t = 0.004 \text{, then }  \Delta x = 0.02 .
 \end{align*}
 In each case, and for each scheme, we compute the relative energy errors.
Figure \ref{fig5} shows the obtained results. The relative energy errors are controlled under $10^{-14}$ after a long time, which is in accordance with the energy conservation. Note that $(4,4)$-scheme provides better results. {The plots of differences in energy $\ds \mathcal{E}^{n+\frac{1}{2}}_h - \mathcal{E}^{n-\frac{1}{2}}_h$ are also analyzed and they are essentially zero, which also confirms our theoretical analysis of the energy estimate \eqref{en1} and the results in Table \ref{Table1}.}
 \begin{figure}[h!]
 	\centering
 	\includegraphics[scale= 0.35]{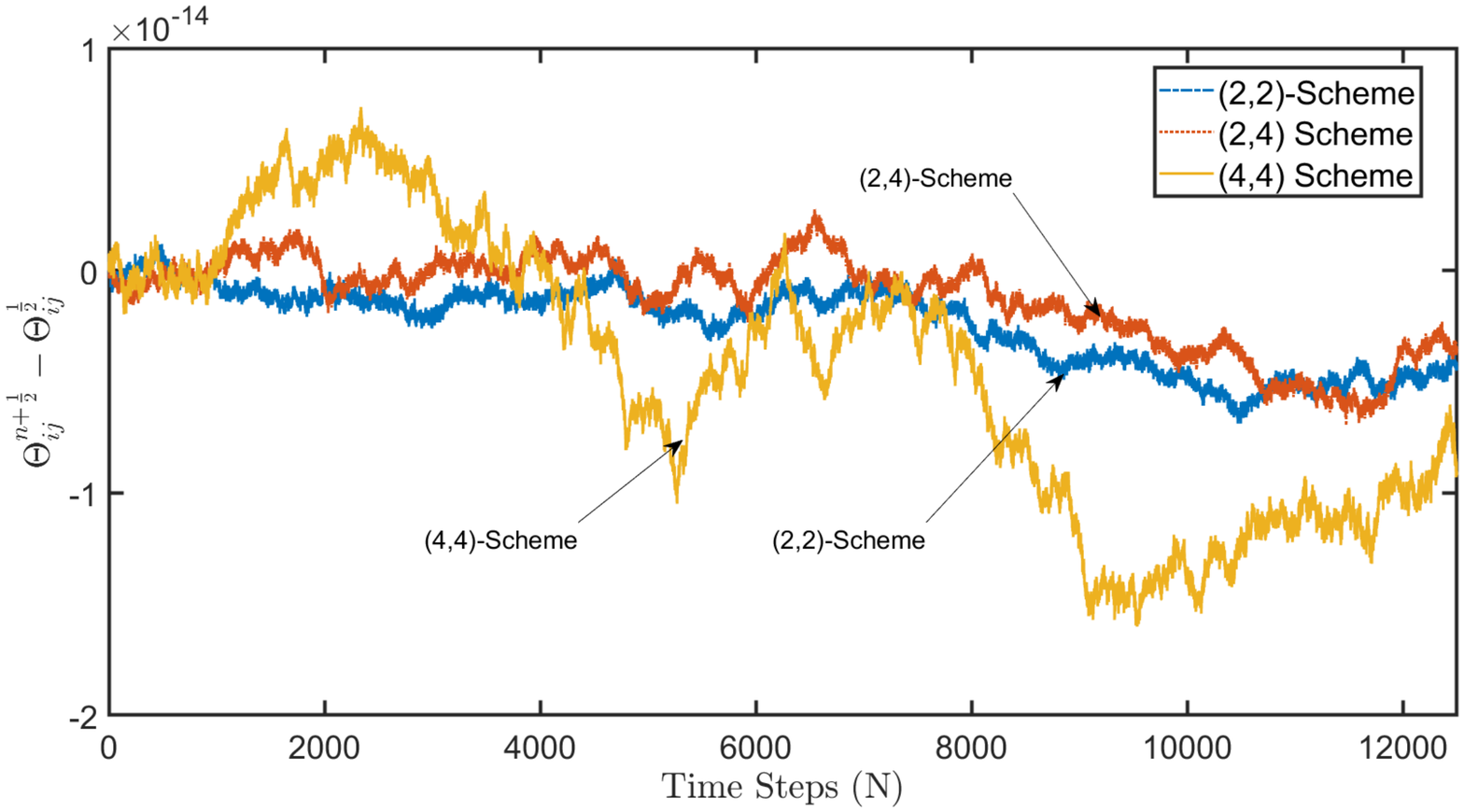} 
	\includegraphics[scale= 0.35]{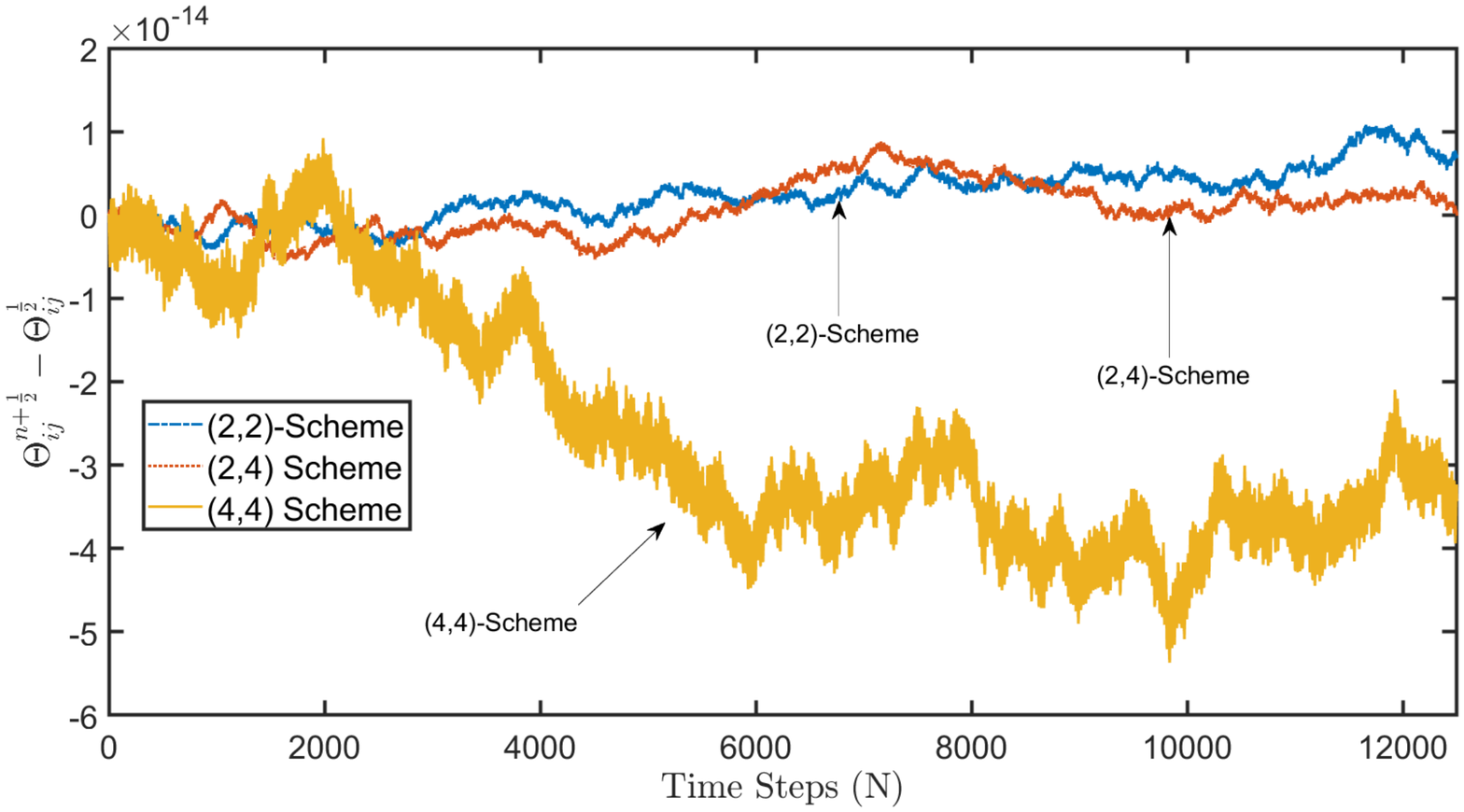} 
 	\caption{Differences in Relative energy errors {$\Theta_{ij}^{n+\frac{1}{2}}-\Theta_{ij}^{\frac{1}{2}}$} over time: Case 1 (Top); Case 2 (Bottom).} 
 	\label{fig5}
 \end{figure}
\vspace{-0.5cm}

\subsection{Results in 2D}
We consider now the two dimensional Maxwell-Drude model for the transverse electric (TE) polarization: $\bv{E} = (E_x,E_y)$ and $K = K_z$. The second order formulation for the $(\bv{E}, K)$ pair becomes:
	\begin{subequations}
\begin{align}
& \partial_{tt} \bv{E}  + c^2 \curlb \curl \,\bv{E} + {\omega_{pe}^2}  \bv{E} = -c^2 \curlb {K},\\
& \partial_{tt} {K}  + {\omega_{pm}^2}  {K} = -\omega_{pm}^2 \curl \,\bv{E},
\end{align}
	\end{subequations}
	along with periodic conditions. One obtains the $(4,4)$-scheme for the two dimensional Maxwell-Drude model from \eqref{MD_full_a}-\eqref{MD_full_b} by plugging the polarization and using the discrete operators \eqref{discurl}.
We consider the domain $\Omega = [0,1] \times[0,1]$, and design an exact solution of the form
	\begin{subequations}
		\label{exact_solD2}
		\begin{align} \ds
		E_x(x,y,t) 
		&= -\frac{k_y}{\omega} \sin(\omega \pi t) \sin(k_x \pi x)  \cos(k_y \pi y),  \\
		E_y(x,y,t) 
		&= \frac{k_x}{\omega} \sin(\omega \pi t) \cos(k_x \pi x)  \sin(k_y \pi y),  \\
		K(x,y ,t) &= \frac{\mu_0\omega_{pm}^2}{\pi \omega} \sin(\omega \pi t) \sin(k_x \pi x)  \sin(k_y \pi y),
		\end{align}
	\end{subequations}
 and we set $\bv{k} = (k_x,k_y) = (2,2)$, $\omega_{pe} = 10$,
	\begin{subequations}
		\begin{align*}
		\ds 
 \omega = \frac{\omega_{pe}}{\pi}\sqrt{\frac{\varepsilon_0}{1+\varepsilon_0}}, \,	\omega_{pm} = \left[ \frac{1}{\mu_0}\left( |\bv{k}|^2\pi^2 + \frac{\omega_{pe}^2}{c^2(1+\varepsilon_0)} \right) \right]^{1/2}.
		\end{align*}
	\end{subequations}
	The largest time step and mesh sizes are taken similarly to the 1D case. The numerical and exact solutions for a Maxwell-Drude metamaterial at $t = 0.5$ are shown in Figure \ref{fig3} for $K$. 
	   \begin{figure}[h!]
	  	\centering
	  	\includegraphics[scale= 0.45]{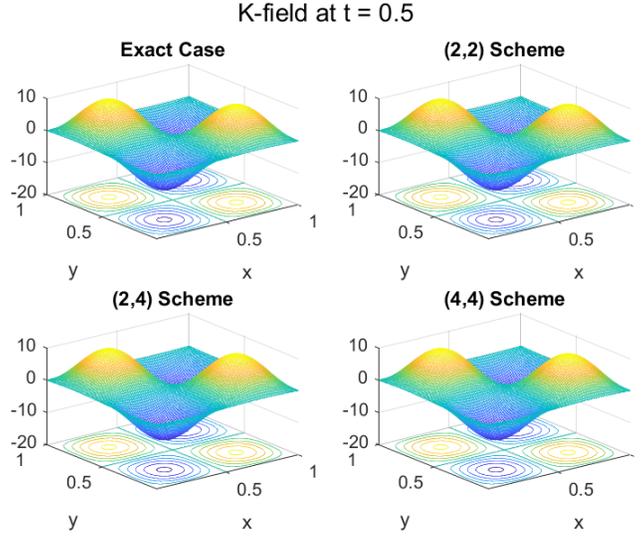}
	  	\caption{Exact and numerical solutions for Maxwell-Drude metamaterial in 2D at $t = 0.5$ for $K$.}	
	  	\label{fig3}
	  \end{figure}
   The quiver plots of the electric field $(E_x,E_y)$ are shown in Figure \ref{fig4}. 
   	   \begin{figure}[h!]
   	\centering
   	\includegraphics[scale= 0.48]{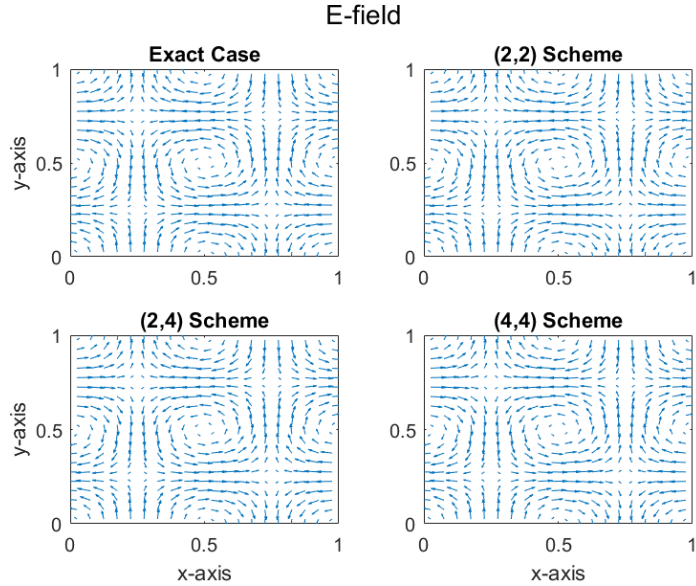} 
   	\caption{Quiver plot of the exact and numerical solutions for Maxwell-Drude metamaterial in 2D. Here we chose $N=100$. For visual purposes, we adjust both components localization to the mesh grid center: $E_x$ and $E_y$ are computed on different grids.}	
   	\label{fig4}
   \end{figure}
Figure \ref{fig10}, Tables \ref{Table4} and \ref{Table5} provide the rates of convergence for the three schemes. Similar conclusions to the 1D case hold.   
 \begin{figure}[h!]
 	\centering
 	\includegraphics[scale= 0.19]{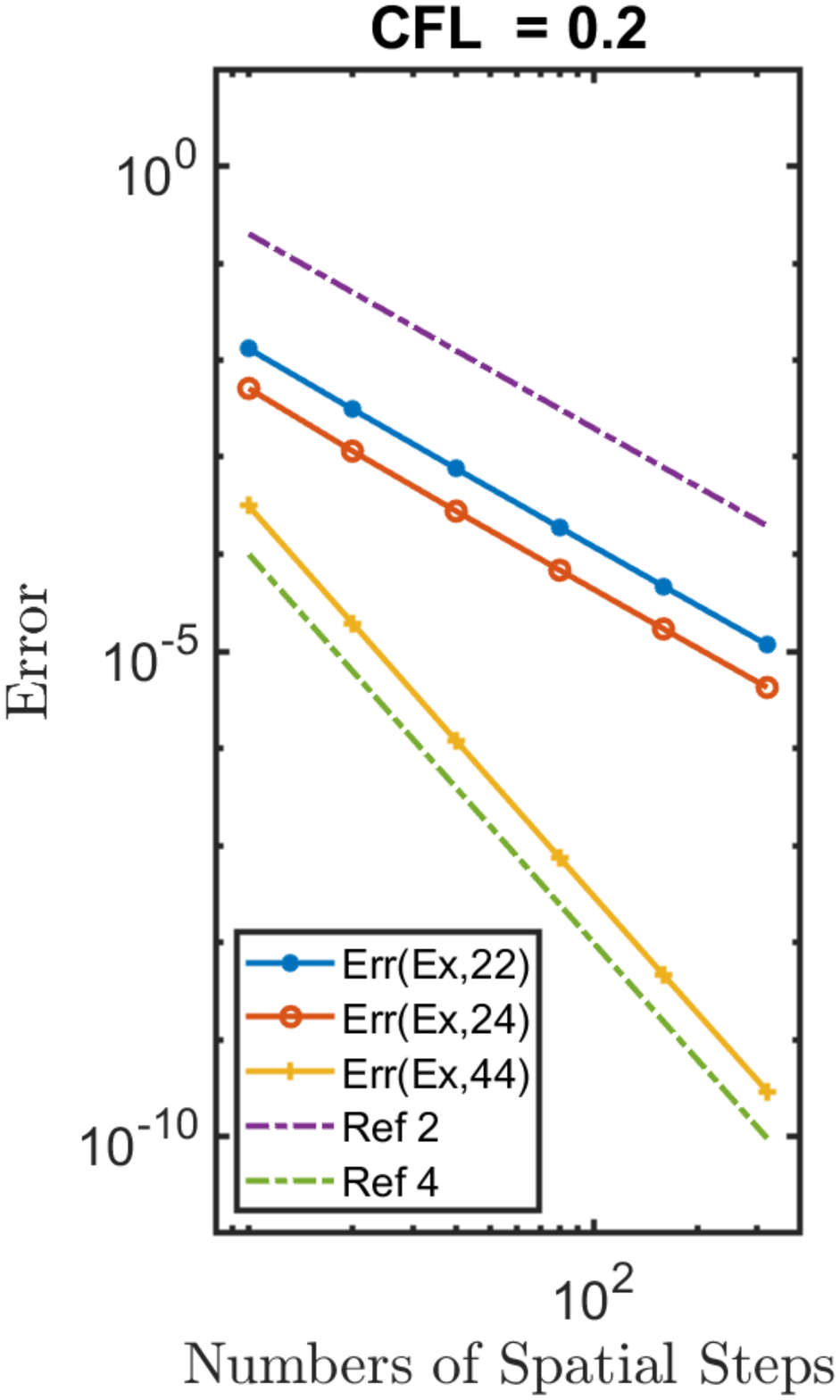} 
 	\includegraphics[scale= 0.19]{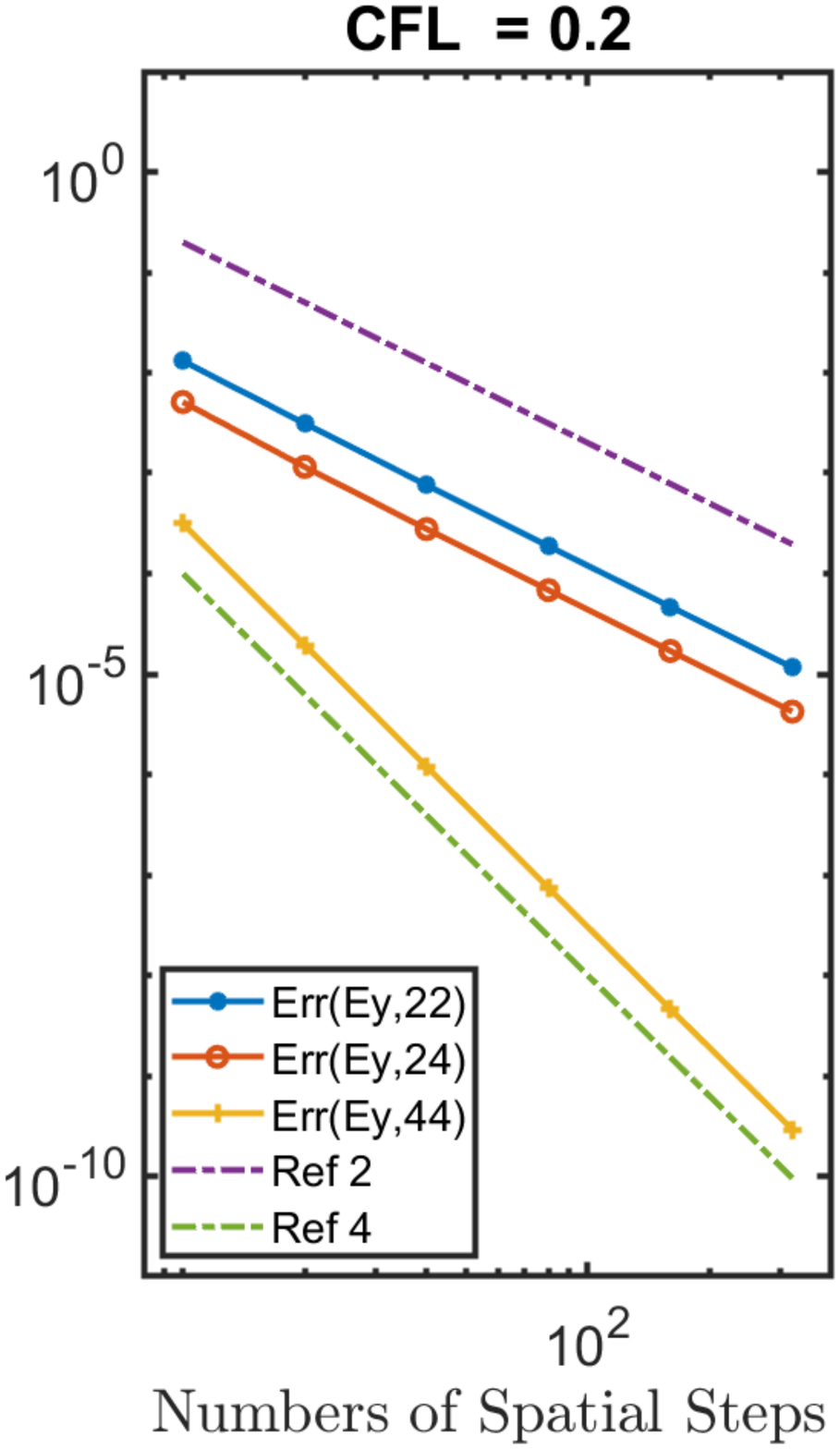} 
 	\includegraphics[scale= 0.19]{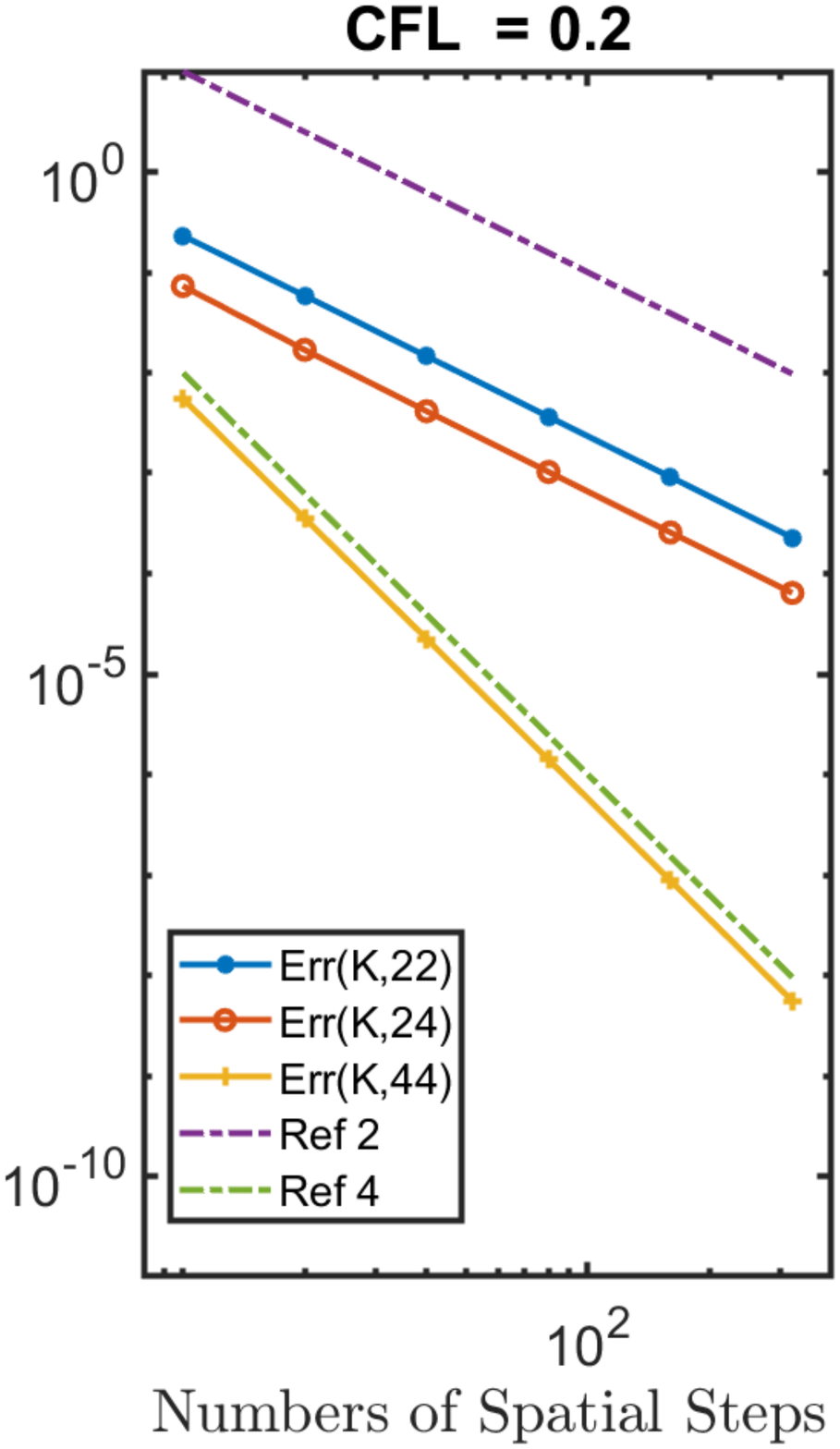}
 	\caption{Convergence rates for the Maxwell-Drude metamaterial in 2D {Left: Err($E_x$), Middle: Err($E_y$), and Right:  Err($K$)}.}	
 	\label{fig10}
 \end{figure}

	\begin{table}[h!]
	\centering
	\begin{tabular}{ |C{2cm}|C{2cm}|C{2cm}|C{2cm}|C{2cm}|C{2cm}| }
		\hline
		\multicolumn{6}{|c|}{ Error for \textbf{E} $ =(E_x,E_y)$ with CFL $\nu = 0.2$} \T \B  \\
		\hline
		$\Delta t$ & $\Delta x$   & \multicolumn{4}{c|}{(4,4)-scheme} 
		\T \B   \\
		\cline{3-6}
		& & \textbf{{Err($E_x$)}}  & \textbf{Rate} & \textbf{{Err($E_y$)}}  & \textbf{Rate} \\
		\hline		 
		2.00e-2 & 1.00e-1    
		& 3.136e-04      & -
		& 3.136e-04      & - \\
		\hline 	
		1.00e-2 & 5.00e-2    
		& 1.943e-05      & -4.013
		& 1.943e-05      & -4.013 \\
		\hline 	
		5.00e-3 & 2.50e-2    
		& 1.197e-06      & -4.021
		& 1.197e-06      & -4.021 \\
		\hline 
		2.50e-3 & 1.25e-2    
		& 7.406e-08      & -4.014
		& 7.406e-08      & -4.014 \\
		\hline 	
		1.25e-3 & 6.25e-3    
		& 4.605e-09      & -4.008
		& 4.605e-09      & -4.008 \\
		\hline 	
		6.25e-4 & 3.125e-3    
		& 2.919e-10      & -3.980
		& 2.919e-10      & -3.980 \\
		\hline 	
		\hline
		$\Delta t$ & $\Delta x$   & \multicolumn{4}{c|}{(2,4)-scheme} 
		\T \B   \\
		\cline{3-6}
		& & \textbf{{Err($E_x$)}}  & \textbf{Rate} & \textbf{{Err($E_y$)}}  & \textbf{Rate} \\
		\hline		 
		2.00e-2 & 1.00e-1    
		& 5.107e-03      & -
		& 5.107e-03      & - \\
		\hline 	
		1.00e-2 & 5.00e-2    
		& 1.154e-03      & -2.146
		& 1.154e-03      & -2.146       \\
		\hline 	
		5.00e-3 & 2.50e-2    
		& 2.776e-04      & -2.055
		& 2.776e-04      & -2.055      \\
		\hline 
		2.50e-3 & 1.25e-2    
		& 6.831e-05      & -2.023
		& 6.831e-05      & -2.023      \\
		\hline 	
		1.25e-3 & 6.25e-3    
		& 1.696e-05      & -2.010
		& 1.696e-05      & -2.010      \\
		\hline 	
		6.25e-4 & 3.125e-3    
		& 4.225e-06      & -2.005
		& 4.225e-06      & -2.005     \\
		\hline 	
		\hline
		$\Delta t$ & $\Delta x$   & \multicolumn{4}{c|}{(2,2)-scheme} 
		\T \B   \\
		\cline{3-6}
		& & \textbf{{Err($E_x$)}}  & \textbf{Rate} & \textbf{{Err($E_y$)}}  & \textbf{Rate} \\
		\hline		 
		2.00e-2 & 1.00e-1    
		& 1.307e-02      & -
		& 1.307e-02      & - \\
		\hline 	
		1.00e-2 & 5.00e-2    
		& 3.117e-03      & -2.068
		& 3.117e-03      & -2.068     \\
		\hline 	
		5.00e-3 & 2.50e-2    
		& 7.607e-04      & -2.035
		& 7.607e-04      & -2.035      \\
		\hline 
		2.50e-3 & 1.25e-2    
		& 1.879e-04      & -2.018
		& 1.879e-04      & -2.018      \\
		\hline 	
		1.25e-3 & 6.25e-3    
		& 4.667e-05      & -2.009
		& 4.667e-05      & -2.009     \\
		\hline 	
		6.25e-4 & 3.125e-3    
		& 1.163e-05      & -2.005
		& 1.163e-05      & -2.005      \\
		\hline 		
				    						    		
	\end{tabular}  
	\caption{Errors of $E$ for the Maxwell-Drude metamaterial in 2D.}
	\label{Table4}
		 \vspace{-0.5cm}
\end{table}	
\begin{table}[h!]
	\centering
	\begin{tabular}{ |C{2cm}|C{2cm}|C{2cm}|C{2cm}| }
		\hline
		\multicolumn{4}{|c|}{Error for $K$ with CFL $\nu = 0.2$} \T \B  \\
		\hline
		$\Delta t$ & $\Delta x$   & \multicolumn{2}{c|}{(4,4)-scheme}   \T \B   \\
		\cline{3-4}
		& & \textbf{{Err($K$)}}  & \textbf{Rate} \\
		\hline		 
		2.00e-2 & 1.00e-1    
		& 5.525e-03      & - \\
		\hline    
		1.00e-2 & 5.00e-2
		& 3.556e-04      & -3.958 \\
		\hline    
		5.00e-3 & 2.50e-2    
		& 2.238e-05      & -3.990\\ 
		\hline
		2.50e-3 & 1.25e-2    
		& 1.402e-06     & -4.000\\
		\hline
		1.25e-3 & 6.25e-3    
		& 8.766e-08		& -3.999 \\
		\hline
		6.25e-4 & 3.125e-3    
		& 5.549e-09		&  -3.982\\
		\hline	
		\hline		
		$\Delta t$ & $\Delta x$   & \multicolumn{2}{c|}{(2,4)-scheme}   \T \B   \\
		\cline{3-4}
		& & \textbf{{Err($K$)}}  & \textbf{Rate} \\
		\hline		 
		2.00e-2 & 1.00e-1    
		& 7.302e-02      & - \\
		\hline    
		1.00e-2 & 5.00e-2
		& 1.688e-02       & -2.113 \\
		\hline    
		5.00e-3 & 2.50e-2    
		& 4.132e-03      & -2.031 \\ 
		\hline
		2.50e-3 & 1.25e-2    
		& 1.028e-03     & -2.008 \\
		\hline
		1.25e-3 & 6.25e-3    
		& 2.565e-04		& -2.002 \\
		\hline
		6.25e-4 & 3.125e-3    
		& 6.411e-05		&  -2.001 \\
		\hline						
		\hline		
		$\Delta t$ & $\Delta x$   & \multicolumn{2}{c|}{(2,2)-scheme}   \T \B   \\
		\cline{3-4}
		& & \textbf{{Err($K$)}}  & \textbf{Rate} \\
		\hline		 
		2.00e-2 & 1.00e-1    
		& 2.310e-01      & - \\
		\hline    
		1.00e-2 & 5.00e-2
		& 5.830e-02       & -1.987 \\
		\hline    
		5.00e-3 & 2.50e-2    
		& 1.460e-02      & -1.998 \\ 
		\hline
		2.50e-3 & 1.25e-2    
		& 3.651e-03     & -1.999 \\
		\hline
		1.25e-3 & 6.25e-3    
		& 9.129e-04		& -2.000 \\
		\hline
		6.25e-4 & 3.125e-3    
		& 2.282e-04		&  -2.000 \\
		\hline			
	\end{tabular} 
	\caption{Errors of $K$ for the Maxwell-Drude metamaterial in 2D.}
	\label{Table5}
			 \vspace{-0.5cm}		
\end{table}	

\section{Conclusion}\label{sec:conclu}
We have constructed a full fourth-order FDM scheme for the time dependent Maxwell-Drude metamaterial model, established stability of the continuous model, by proving energy estimates for the semi and fully-discrete schemes in one dimension. {The new scheme is conditionally stable as the associated discrete energy mimics the one from the continuous model under a CFL condition.} Theoretical rates of convergence have been validated by numerical examples in one and two dimensions, and comparison of this new scheme with the (2,2)-FDM scheme and (2,4)-FDM scheme have been provided. Similar discrete energy estimates hold for two and three dimensions. Extensions to include the divergence conditions will be considered in future work. Current work involves developing the new FDM method for other metamaterial models, such as the Drude model with dissipation and the Lorentz metamaterial model. In these cases the electromagnetic fields are completely coupled, and there is an energy decay due to dissipation. Extensions of our method to metamaterial-dielectric interface problems are also currently being developed.

\section*{Acknowledgment}
This material is based upon work partially supported by the National Science Foundation under the grant numbers: NSF-1720116 (PI Bokil), and NSF-1819052 (PI Carvalho). 


%
%
%

\end{document}